\renewcommand{\injlim}{\varinjlim}
\crefname{equation}{}{}
\crefname{enumi}{}{}
\newlist{conenum}{enumerate}{1}
\setlist[conenum,1]{label=(\roman*),ref=\roman*}
\crefname{conenumi}{}{}
\numberwithin{equation}{section}
\theoremstyle{plain}
\newtheorem{Theorem}{Theorem}
\crefname{Theorem}{Theorem}{Theorems}
\newtheorem{conjecture}[equation]{Conjecture}
\newtheorem{corollary}[equation]{Corollary}
\newtheorem{lemma}[equation]{Lemma}
\newtheorem{proposition}[equation]{Proposition}
\newtheorem{theorem}[equation]{Theorem}
\theoremstyle{definition}
\newtheorem{definition}[equation]{Definition}
\newtheorem{example}[equation]{Example}
\theoremstyle{remark}
\newtheorem{remark}[equation]{Remark}
\let\oldtop\top\let\top\relax
\newcommand{\NN}{\mathbf{N}}
\newcommand{\ZZ}{\mathbf{Z}}
\newcommand{\RR}{\mathbf{R}}
\newcommand{\CC}{\mathbf{C}}
\newcommand{\E}{\mathrm{E}}
\newcommand{\top}{\textnormal{top}}
\newcommand{\Alg}{\operatorname{Alg}}
\newcommand{\End}{\operatorname{End}}
\newcommand{\Hom}{\operatorname{Hom}}
\newcommand{\Ind}{\operatorname{Ind}}
\newcommand{\Map}{\operatorname{Map}}
\newcommand{\Mat}{\operatorname{Mat}}
\newcommand{\CAlg}{\operatorname{CAlg}}
\newcommand{\Mod}{\operatorname{Mod}}
\newcommand{\LMod}{\operatorname{LMod}}
\newcommand{\PShv}{\operatorname{PShv}}
\newcommand{\Perf}{\operatorname{Perf}}
\newcommand{\Spec}{\operatorname{Spec}}
\newcommand{\cofib}{\operatorname{cofib}}
\newcommand{\id}{\operatorname{id}}
\newcommand{\map}{\operatorname{map}}
\newcommand{\X}{\mathord{-}}
\newcommand{\cat}[1]{\mathcal{#1}}
\newcommand{\idl}[1]{\mathfrak{#1}}
\newcommand{\Cls}[1]{\mathscr{#1}}
\newcommand{\Cat}[1]{\mathsf{#1}}
\title{Rosenberg's conjecture for the first negative \texorpdfstring{\(K\)}{K}-group}
\author{Ko Aoki}
\address{Max Planck Institute for Mathematics,
  Vivatsgasse 7, 53111 Bonn, Germany
}
\email{aoki@mpim-bonn.mpg.de}
\date{\today}
\begin{document}

\begin{abstract}
  Based on his claims in~1990,
  Rosenberg conjectured in~1997
  that the negative algebraic \(K\)-groups of C*-algebras
  are invariant under continuous homotopy.
  Contrary to his expectation,
  we prove that such invariance holds for~\(K_{-1}\)
  of arbitrary Banach rings
  by establishing a certain continuity result.
  We also construct examples demonstrating
  that similar continuity results do not hold for lower \(K\)-groups.
\end{abstract}

\maketitle

\section{Introduction}\label{s:intro}

We write~\(K\) for
(nonconnective) algebraic \(K\)-theory.
For C*-algebras,
we also have topological \(K\)-theory~\(K^{\top}\).
One key property that distinguishes~\(K^{\top}\)
from~\(K\) is homotopy invariance;
i.e., the tautological map
\(K_{*}^{\top}(A)\to K_{*}^{\top}(\Cls{C}([0,1];A))\)
is an equivalence,
where \(\Cls{C}\) denotes
the ring of continuous function.
This is not true for~\(K\) as
seen by considering the simple case \(A=\CC\) and \({*}=1\).
Nonetheless,
Rosenberg~\cite[Conjecture~2.2]{Rosenberg97}
(cf.~\cite[page~464]{Rosenberg90})
proposed the following:

\begin{conjecture}[Rosenberg]\label{rosenberg}
  For a (real) C*-algebra~\(A\),
  the tautological map
  \begin{equation*}
    K_{*}(A)\to K_{*}(\Cls{C}([0,1];A))
  \end{equation*}
  is an isomorphism
  for \({*}\leq0\).
\end{conjecture}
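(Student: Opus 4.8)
The plan is to peel off an endpoint evaluation, reduce to the cone, and then trivialize the cone by an Eilenberg swindle whose only gap is a continuity property of $K$ that I must verify in every nonpositive degree. Evaluation at $0$ exhibits $A$ as a split retract of $\Cls{C}([0,1];A)$ (split by the constant functions), so $K_{*}(A)\to K_{*}(\Cls{C}([0,1];A))$ is split injective and it remains to kill the complementary summand for ${*}\le 0$. Writing $J:=\Cls{C}_0((0,1];A)$ for the cone ideal of functions vanishing at $0$, excision — available because C*-algebras carry approximate units and are in particular H-unital, so that Suslin--Wodzicki-type excision applies in the relevant range — identifies this summand with the (non-unital) $K$-theory of $J$. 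The goal becomes $K_{*}(J)=0$ for ${*}\le 0$.

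\textbf{The swindle.}
To trivialize $J$ I would use the self-similarity of the cone. Decomposing $(0,1]=\coprod_{n\ge 1}(2^{-n},2^{-n+1}]$ into intervals accumulating at $0$ and compressing a function into each piece defines a $*$-homomorphism $\Phi\colon J\to J$; disjointness of supports makes $\Phi$ multiplicative, and shifting the indexing of the intervals yields an identification $\Phi\cong\id\oplus\Phi$ with respect to the $c_0$-direct-sum structure on $J$. Were $K$-theory to send this $c_0$-sum to a coproduct of spectra, additivity would give $K(\Phi)=K(\id)+K(\Phi)$ and hence $K(J)\simeq 0$ outright. Thus $J$ is flasque up to completion, and the whole statement is equivalent to the assertion that, in degrees ${*}\le 0$, $K$ does not distinguish the algebraic direct sum sitting densely inside $J$ from its $c_0$-completion.

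\textbf{Why nonpositive degrees.}
This continuity is exactly the phenomenon that separates nonpositive from positive degrees. By the Bass fundamental theorem the negative $K$-groups are assembled from $K_0$ of iterated Laurent extensions, and $K_0$ is governed by idempotent matrices; an idempotent over a $c_0$-completion lies within an arbitrarily small perturbation of one with entries in only finitely many summands, and idempotents are rigid under such perturbations, so their classes cannot see the completion. Positive $K$-groups instead record the higher homotopy of $GL$, which is not finite data and is destroyed by completion — precisely why continuous homotopy invariance must fail for $K_{\ge 1}$. Concretely I would exhaust $J$ by the subalgebras of functions that are affine on a dyadic subdivision, each a finite iterated fiber product of copies of $A[t]$, prove the continuity statement for $K_0$ by the perturbation argument, and propagate it to each $K_{-n}$ through the Bass deloopings.

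\textbf{The main obstacle.}
The hard part is this last propagation. Passing from continuity for $K_0$ to continuity for $K_{-n}$ requires the perturbation-rigidity of idempotents to survive over the auxiliary rings $J[t_1^{\pm 1},\dots,t_n^{\pm 1}]$, and the interplay between the Laurent variables and the $c_0$-completion grows more delicate as $n$ increases; this is the technical core on which every degree below $K_{-1}$ rests, and where I expect the real work to lie. A secondary point demanding care is the integral — as opposed to merely rational — validity of excision for the cone, for which the H-unitality input must be reinforced.
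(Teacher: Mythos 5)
You are attempting to prove what the paper states, and leaves, as a conjecture: the paper itself establishes only the case \({*}=-1\) (\cref{hinv}), the case \({*}=0\) being classical, so a correct proof in all nonpositive degrees would go well beyond the paper — and your argument has concrete fatal gaps. The most basic one is that your swindle map does not exist: compressing \(f\in J=\Cls{C}_0((0,1];A)\) into the intervals \((2^{-n},2^{-n+1}]\) produces a function whose supremum over the \(n\)-th interval equals \(\lVert f\rVert\) for \emph{every}~\(n\), so \(\Phi(f)\) does not vanish at \(0\) (and has jump discontinuities at the dyadic points unless \(f(1)=0\)); \(\Phi\) lands at best in \(\Cls{C}_b((0,1];A)\), not in~\(J\). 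Even setting this aside, the identification \(\Phi\cong\id\oplus\Phi\) is only a reparametrization homotopy, and invariance of algebraic \(K\)-theory under continuous homotopy is precisely the statement under proof, so the swindle is circular unless it is grounded in the very continuity property you defer to the last step.

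That deferred continuity is where the proposal definitively fails: the principle that in degrees \({*}\leq0\) \(K\)-theory cannot distinguish a dense algebraic colimit from its Banach completion is refuted by the paper itself. \Cref{ce} (via \cref{ce_sur,ce_inj}) exhibits sequences of commutative complex Banach algebras for which \(\injlim_{n}K_{*}(A_{n})\to K_{*}(A)\) fails surjectivity for every \({*}\leq-1\) and injectivity for every \({*}\leq-2\); what survives is exactly \cref{main}, injectivity at \(K_{-1}\) alone. Your propagation mechanism is also the known dead end: pushing idempotent rigidity through Bass deloopings over \(J[t_{1}^{\pm1},\dots,t_{n}^{\pm1}]\) is Rosenberg's original strategy, and \cref{xwrrxv} explains why it does not work in the Banach setting — Laurent extensions of Banach algebras are not Banach, so the perturbation lemmas for idempotents (\cref{app_conj,app_idem}) have no purchase over them. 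The paper instead deloops by realizing \(A\) as a corner of \(\End_{A}(\ell^{1}(A))\) and identifying \(K_{-1}(A)\) with \(K_{0}\) of a Calkin-like \emph{algebraic} quotient (\cref{xxg7qb}); this stays within Banach rings just long enough to apply \(K_{0}\)-rigidity once, which is exactly why the method stops at \(K_{-1}\): the quotient cannot be made Banach, since otherwise iterating would contradict \cref{ce}. In short, your reduction is broken at the swindle, and the continuity you would need in degrees \({*}\leq-2\) is not merely unproven but false in the generality your argument requires.
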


Suppose that \(A\) is complex
for simplicity.\footnote{The real commutative
  and stable cases
  were proven later in~\cite{k-ros-1}
  and~\cite{KaroubiWodzicki13},
  respectively.
}
This conjecture is known to hold for \({*}\leq0\)
in two extreme cases:
When \(A\) is commutative,
it was proven by Cortiñas--Thom~\cite{CortinasThom12}.
When \(A\) is stable,
this follows 
from the result of
Suslin--Wodzicki~\cite{SuslinWodzicki92}.
For arbitrary~\(A\),
only the case for \(K_{0}\simeq K^{\top}_{0}\) was known.
In this paper,
we prove the first nontrivial result
for any~\(A\).
In fact,
our setting is more general
than C*-algebras:

\begin{Theorem}\label{hinv}
  Let \(A\) be a Banach ring\footnote{Banach rings are assumed to be unital in this paper,
    but the nonunital case follows from the unital case.
  }.
  Then the tautological map
  \begin{equation*}
    K_{-1}(A)\to K_{-1}(\Cls{C}([0,1];A))
  \end{equation*}
  is an isomorphism.
\end{Theorem}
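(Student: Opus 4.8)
The plan is to isolate the entire content of the statement in the vanishing of a single relative group, and then to attack that vanishing by playing the analytic structure of the Banach ring against Bass's delooping of negative \(K\)-theory. The map in question is induced by the inclusion \(\iota\colon A\to\Cls{C}([0,1];A)\) of constant functions, and evaluation at the endpoint \(\operatorname{ev}_0\colon\Cls{C}([0,1];A)\to A\) satisfies \(\operatorname{ev}_0\circ\iota=\id_A\); hence the induced map on every \(K\)-group is split injective and only surjectivity is at issue. Writing \(R=\Cls{C}([0,1];A)\) and \(CA=\ker(\operatorname{ev}_0)=\{f\in R:f(0)=0\}\) for the (nonunital) cone, the splitting yields a decomposition \(K(R)\simeq K(A)\oplus K(CA)\) with \(K(CA)=\fib(\operatorname{ev}_0)\), so the theorem is equivalent to the vanishing \(K_{-1}(CA)=0\). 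This is the correct target: \(CA\) is the algebraic avatar of a contractible C*-algebra, and the contraction \((\phi_r f)(s)=f(rs)\) for \(r\in[0,1]\), which consists of nonunital endomorphisms of \(CA\) with \(\phi_1=\id\) and \(\phi_0=0\), exhibits \(\id_{CA}\) as continuously homotopic to zero---the phenomenon that must be promoted to a statement about \(K_{-1}\).

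First I would record the formal properties of negative \(K\)-theory that legitimize the reduction: split-exactness, and the Mayer--Vietoris sequence for the Milnor squares obtained by subdividing \([0,1]\) at an interior point (classical for \(K_0,K_1\), extended to negative degrees by Bass). To then access \(K_{-1}\) I would use Bass's presentation as the cokernel \(\operatorname{coker}\bigl(K_0(-[t])\oplus K_0(-[t^{-1}])\to K_0(-[t,t^{-1}])\bigr)\), applied compatibly with the splitting above. In these terms a class in \(K_{-1}(CA)\) is represented by an idempotent matrix \(e(s,t)\) over \(R[t,t^{-1}]\) that is extended at \(s=0\)---equivalently, after normalization, a finitely generated projective \(CA[t,t^{-1}]\)-module, an idempotent whose entries are Laurent polynomials in \(t\) with coefficients continuous functions of \(s\) vanishing at \(s=0\)---and the goal becomes to show that every such class is extended from \(CA[t]\) and \(CA[t^{-1}]\).

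The heart is a continuity statement proved from the Banach structure. Near any point where the coefficients of \(e\) are small---in particular near \(s=0\), where they vanish---the idempotent \(e(s,t)\) is a small perturbation of an extended idempotent, and the standard Banach-algebra fact that two idempotents close in norm are conjugate by a unit of the form \(1+(\text{small})\), built from \(ef+(1-e)(1-f)\) by holomorphic functional calculus, allows one to conjugate \(e\) into extended form. Since \([0,1]\) is compact, these local conjugations can be performed with uniform estimates and patched across a finite subdivision, while the contraction \(\phi_r\) propagates the trivialization obtained near \(s=0\) over the whole interval. Packaged correctly, this shows that the class of \(e\) in the Bass cokernel is detected by arbitrarily fine subdivisions and therefore, by contractibility of the cone, is trivial; this is the \emph{continuity result} referred to in the abstract.

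The hard part will be transporting the analytic argument across the purely algebraic delooping. The rings \(CA[t]\), \(CA[t^{-1}]\), and \(CA[t,t^{-1}]\) carry no complete norm, so the perturbation and functional-calculus arguments---which live on the Banach ring \(CA\)---cannot be applied to \(e(s,t)\) verbatim. The resolution I envisage is to restrict to idempotents of controlled shape, with bounded \(t\)-degree and coefficients in \(CA\), and to show that the conjugators furnished by the perturbation lemma can be chosen of the same controlled shape, so that the estimates stay uniform in the analytic variable \(s\) while the variable \(t\) remains polynomial. One must further verify that no genuinely ``polynomial in \(t\)'' obstruction survives, i.e. that the argument kills any \(NK_0\)-type contribution; here the vanishing of the coefficients at \(s=0\) together with the split-exactness of the Bass sequence should force the class into the extended part. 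Reconciling the analytic control in \(s\) with the algebraic bookkeeping in \(t\) is, I expect, where essentially all of the work lies.
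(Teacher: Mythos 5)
Your opening reduction is fine: splitting off $K(A)$ via $\operatorname{ev}_0$ and reducing to the vanishing of the relative $K_{-1}$ of the cone $CA=\{f:f(0)=0\}$ is legitimate. But the engine you then propose---Bass delooping plus idempotent perturbation---contains a gap that is not a technicality but the known fatal obstruction to exactly this strategy; indeed it is Rosenberg's original plan, and the reason he expected the conjecture to be \emph{special} to C*-algebras rather than valid for general Banach algebras (see \cref{xwrrxv}). Once a class in $K_{-1}(CA)$ is presented by an idempotent over $CA[t,t^{-1}]$, every analytic tool you invoke is unavailable: the perturbation lemma (nearby idempotents are conjugate) and the invertibility of $1+(\text{small})$ both rest on convergence of geometric series, hence on a complete norm, and $CA[t,t^{-1}]$ carries none suited to the problem. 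Your proposed remedy---idempotents and conjugators of bounded $t$-degree with coefficients in $CA$---does not close this: even though $u=ef+(1-e)(1-f)$ has bounded degree, its inverse is forced to be $\sum_{n}(1-u)^{n}$, and neither the convergence of this series nor the membership of its limit in any bounded-degree subspace can be established; invertibility in the Laurent ring simply cannot be obtained from a norm estimate, and completing the ring to make the estimate meaningful changes the ring and destroys the Bass presentation. Separately, your appeal to the contraction $\phi_r$ ``propagating the trivialization'' is circular: contractibility of $CA$ does not kill algebraic $K$-theory (for instance $K_1$ of the cone of $\CC$ is nonzero, since $K_1(\Cls{C}([0,1];\CC))$ is far larger than $K_1(\CC)$---the very failure of homotopy invariance noted in \cref{s:intro}), so the contraction can only act through a continuity theorem that your outline presupposes rather than proves.

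The paper's proof goes a genuinely different way precisely to sidestep this. It abandons Bass delooping for a Calkin-type delooping: $K_{-1}(A)\simeq K_0$ of the \emph{algebraic} quotient $\End_A(\ell^1(A))\big/\bigl(\Hom_A(\ell^1(A),A)\otimes_A\ell^1(A)\bigr)$ (\cref{xxg7qb}), where the ambient ring is an honest Banach ring with vanishing $K$-theory. Because the target is now a $K_0$, the Banach perturbation arguments (\cref{app_conj,app_idem}) apply legitimately, yielding the semicontinuity statement of \cref{main}: for a filtered colimit $A=\injlim_i A_i$ of Banach rings, $\injlim_i K_{-1}(A_i)\to K_{-1}(A)$ is injective. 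Homotopy invariance then follows by an argument absent from your proposal: subdivide $[0,1]$, use excision for negative $K$-theory to split the relative group $M(a,c)\simeq M(a,b)\oplus M(b,c)$, and use compactness to localize a hypothetical nonzero class at a single point $p\in[0,1]$, where it contradicts semicontinuity applied to the colimit of the $\Cls{C}([a,b];A)$ over shrinking neighborhoods $[a,b]\ni p$. The one salvageable germ in your sketch is the subdivision idea hiding in your ``patching across a finite subdivision''; but the local input it requires is the colimit injectivity of \cref{main}, not a perturbation statement over Laurent polynomial rings, and obtaining that input is where the paper's actual work (\cref{s:zero,s:deloop,s:one}) lies.
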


\begin{remark}\label{xwrrxv}
  Our generality of \cref{hinv}
  contradicts Rosenberg's comment
  on~\cite[page~85]{Rosenberg97},
  saying that \cref{rosenberg} should be special to C*-algebras
  rather than general real Banach algebras.
  This view stems from his original proof strategy
  (cf.~\cite[page~464]{Rosenberg90}),
  which relies on Bass delooping.
  In contrast,
  we employ a different delooping method in this paper;
  see \cref{s:deloop}.
\end{remark}

\begin{remark}\label{xbw80p}
The statement of \cref{hinv} is trivial
  when \(A\) is ultranormed,
  since the map \(A\to\Cls{C}([0,1];A)\)
  itself is an isomorphism in that case.
  However,
  its significance is not limited
  to real Banach algebras
  since
  it also applies to real quasi-Banach algebras.
\end{remark}

We deduce \cref{hinv}
from the following ``semicontinuity'' result:

\begin{Theorem}\label{main}
  Let \(A\)
  be a filtered colimit of Banach rings~\(A_{i}\),
  i.e., the completion of the algebraic filtered colimit.
  Then the map
  \begin{equation*}
    \injlim_{i}K_{-1}(A_{i})
    \to
    K_{-1}(A)
  \end{equation*}
  is injective.
\end{Theorem}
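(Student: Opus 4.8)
The plan is to reduce the statement to a continuity property for $K_0$ and then to an analytic statement about idempotents. First I would pin down the source of the map. Since nonconnective algebraic $K$-theory commutes with filtered colimits of rings, writing $A_\infty := \injlim_i A_i$ for the \emph{uncompleted} algebraic colimit gives $\injlim_i K_{-1}(A_i) \cong K_{-1}(A_\infty)$, and by hypothesis $A_\infty$ is a dense subring of its completion $A$. Thus the map in \cref{main} is precisely the one induced by the dense inclusion $A_\infty \hookrightarrow A$, and the task becomes to show that $K_{-1}(A_\infty) \to K_{-1}(A)$ is injective.

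Next I would apply the delooping of \cref{s:deloop} in the form of a functorial isomorphism $K_{-1}(R) \cong K_0(SR)$, where $SR$ is a suspension-type ring (a cone ring modulo finite matrices), so that the problem becomes injectivity of $K_0(S A_\infty) \to K_0(S A)$. An element of the kernel is a virtual idempotent $[e]-[f]$ over $S A_\infty$, represented by \emph{eventually idempotent} infinite matrices over $A_\infty$, that becomes trivial over $SA$. Using that $S$ sends the dense inclusion $A_\infty \hookrightarrow A$ to a compatible dense inclusion into a completed suspension ring, I would reduce to showing that such a class, once trivialized over the completion, is already trivial over $S A_\infty$ itself.

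The analytic heart — and the step I expect to be the main obstacle — is this descent. A trivialization over the completion exhibits $e \oplus 1$ and $f \oplus 1$ as conjugate; approximating the conjugating invertible by one over the dense subring $S A_\infty$ yields an idempotent over $S A_\infty$ close to $f$, and two close idempotents in a Banach ring are conjugate by an explicit series in the two idempotents. The difficulty is that this series a priori converges only in the completion, so the naive perturbation argument proves nothing new over $S A_\infty$: there is no free $K_0$-injectivity under dense inclusions, precisely because the conjugators escape to the completion. To make the descent terminate one must exploit the special controlled/Fredholm structure of the suspension ring, under which the $K_{-1}$-class behaves like an index — locally constant and determined by finite data up to small perturbation. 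Establishing this index/continuity statement, comparing $K_0$ of the suspension of the completed ring with $K_0$ of the completed suspension ring in a manner compatible with the idempotent perturbation, is where the real work lies; this is the ``certain continuity result'' of the abstract.

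Finally, I would emphasize that the strategy is essentially tied to $K_{-1}$: only a single suspension enters, so the argument uses only $K_0$-level perturbation of (eventually) idempotents, where close idempotents are genuinely conjugate. For $K_{-2}$ and below one would have to iterate the suspension, and the corresponding perturbation/continuity input fails — consistent with the counterexamples for lower $K$-groups constructed later in the paper.
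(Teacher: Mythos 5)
Your reduction to the dense inclusion \(A_\infty\hookrightarrow A\) and your diagnosis of where the difficulty sits are both accurate, but the proof has a genuine gap at exactly the point you flag: the ``index/continuity statement'' for the suspension ring is not established, and the route you propose for it cannot work as stated. The classical suspension \(SR\) (cone ring modulo finite matrices) is a purely algebraic construction; it carries no submultiplicative norm, so there is no ``completed suspension ring'' and no meaning to ``approximating the conjugating invertible'' or ``close idempotents are conjugate'' inside \(SA_\infty\) or \(SA\). This is precisely the obstruction the paper points out in the introduction: the Calkin-like delooping \emph{cannot} be made Banach (indeed, if it could, the argument would iterate to all negative \(K\)-groups, contradicting \cref{ce}). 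So your plan needs a delooping that keeps one foot in the Banach world, and your proposal does not supply one.

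The paper's resolution consists of two ideas absent from your outline. First, a Banach-compatible delooping (\cref{s:deloop}): realize \(A\) as the corner \(eBe\) of the genuine Banach ring \(B=\End_{A}(\ell^{1}(A))\), which has vanishing \(K\)-theory by an Eilenberg swindle (\cref{isr}); the fiber sequence for corners (\cref{idem}) then gives \(K_{-1}(A)\simeq K_{0}(B/BeB)\), where \(BeB\) is the \emph{algebraic} (uncompleted) ideal, so the quotient is an algebraic quotient of a Banach ring rather than a Banach ring. Second, the continuity input is split into two \(K_{0}\)-statements: \(K_{0}\) of Banach rings commutes with completed filtered colimits (\cref{x90xl4}, proved by the idempotent-perturbation arguments you had in mind, which are legitimate there because everything is Banach), and --- this is the key new point --- for ideals \(I\subset J\) with the same closure, \(K_{0}(A/I)\to K_{0}(A/J)\) is \emph{injective} (\cref{xihxwg}), because \(J/I\) is a radical ring: any \(a\in J\) is within distance \(<1\) of some \(a'\in I\), so \(1+a-a'\) is invertible. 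Combining these (\cref{xlcvuz}) gives injectivity of \(\injlim_{i}K_{0}(B_{i}/B_{i}e_{i}B_{i})\to K_{0}(B/BeB)\) even though the algebraic ideals are not closed, and the five-lemma-style comparison using the vanishing of \(K(B_{i})\) and \(K(B)\) finishes the proof. In short: your outline correctly locates the analytic heart but leaves it unproven, and the fix is not a Fredholm/index structure on the classical suspension but a different delooping plus the radical-ideal observation that lets \(K_{0}\)-injectivity pass between an ideal and its closure.
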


\begin{remark}\label{xui98l}
  Since \(\aleph_{1}\)-filtered colimits
  of Banach rings
  can be computed without completion,
  \cref{main} is essentially about sequential colimits.
  For the same reason,
  the reader should feel free to only consider
  sequences instead of general filtered diagrams in this paper.
\end{remark}

Note that this gives a piece
of evidence for the author's conjecture in~\cite{k-ros-2}:

\begin{corollary}\label{xrvu9v}
  The discreteness conjecture~\cite[Conjecture~7.1]{k-ros-2}
  holds for~\(K_{-1}\).
\end{corollary}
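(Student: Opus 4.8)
\emph{Proof proposal.} The discreteness conjecture of \cite{k-ros-2} predicts, in the range \(n\ge 1\), that the continuous negative \(K\)-groups of a Banach ring are discrete: the condensed abelian group sending a compact Hausdorff space \(X\) to \(K_{-n}(\Cls{C}(X;A))\) should coincide, via the canonical comparison map, with the constant object \(\cst{K_{-n}(A)}\). The plan is to establish this for \(n=1\), and the two theorems above are arranged to supply exactly the two inputs that such a discreteness statement requires. Concretely, I would first reduce the claim that the comparison \(\cst{K_{-1}(A)}\to K_{-1}(\Cls{C}(\X;A))\) is an isomorphism to two checks: homotopy invariance along the interval \([0,1]\), and discreteness of the restriction to profinite sets. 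This reduction rests on the fact that a sheaf of abelian groups on compact Hausdorff spaces that is both homotopy invariant and discrete on profinite sets is already constant, since every compact Hausdorff space is covered by profinite sets while the connected directions are controlled by \([0,1]\).

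The first check is immediate from \cref{hinv}, which says precisely that \(K_{-1}(A)\to K_{-1}(\Cls{C}([0,1];A))\) is an isomorphism, so the relevant sheaf is homotopy invariant. For the second, let \(S=\projlim_i S_i\) be a profinite set with the \(S_i\) finite. Then \(\Cls{C}(S;A)\) is the completion of \(\injlim_i A^{S_i}\), and finite additivity of \(K\)-theory identifies \(K_{-1}(A^{S_i})\) with \(K_{-1}(A)^{S_i}\). Discreteness on profinite sets therefore amounts to the assertion that the assembly map
\begin{equation*}
  \injlim_i K_{-1}(A^{S_i})\longrightarrow K_{-1}(\Cls{C}(S;A))
\end{equation*}
is an isomorphism. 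Its injectivity is exactly \cref{main}, applied to the filtered system \(A_i=A^{S_i}\) whose completed colimit is \(\Cls{C}(S;A)\).

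The remaining half, surjectivity of this assembly map, is the step I expect to be the main obstacle: one must show that every class in \(K_{-1}(\Cls{C}(S;A))\) already descends to a finite quotient \(A^{S_i}\). I would approach it through the delooping of \cref{s:deloop}, which presents a \(K_{-1}\)-class by \(K_{0}\)-data over a Laurent-type extension, and then approximate the idempotent (or automorphism) witnessing such a class over the completed colimit by one defined at a finite stage, exploiting that the transition maps are isometric and that the colimit is completed. Assembling the injectivity from \cref{main}, the homotopy invariance from \cref{hinv}, and this approximation then shows that the comparison map is an isomorphism, which is the discreteness conjecture for \(K_{-1}\).
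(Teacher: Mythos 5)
Your use of \cref{main} is correctly targeted: applying it to the system \(A_{i}=A^{S_{i}}\), whose completed colimit is \(\Cls{C}(S;A)\), is exactly how the paper's semicontinuity theorem bears on the discreteness conjecture, and the identification \(K_{-1}(A^{S_{i}})\simeq K_{-1}(A)^{S_{i}}\) is fine. The genuine gap is the one you flag yourself: the surjectivity of \(\injlim_{i}K_{-1}(A^{S_{i}})\to K_{-1}(\Cls{C}(S;A))\) is never proven, only deferred to an approximation argument, and that argument cannot work as sketched. The paper's own counterexamples (\cref{ce}, \cref{ce_sur}) show that for filtered systems of commutative complex Banach algebras the analogous surjectivity is \emph{false} already for \(K_{-1}\): the holomorphic systems \(\Cls{A}(Z_{n})\) with completed colimit \(\Cls{A}(Z)\) have non-surjective assembly map. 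So any proof in the profinite case must exploit something truly specific to locally constant approximation, and "the transition maps are isometric plus the colimit is completed" is not such an ingredient (the interval systems \(\Cls{C}([a,b];A)\) used in the proof of \cref{hinv} have merely contractive transitions, yet there the colimit map is an isomorphism). Worse, the concrete mechanism you propose---presenting a \(K_{-1}\)-class by \(K_{0}\)-data via the delooping of \cref{s:deloop} and descending the witnessing idempotent to a finite stage---collides head-on with the central obstruction the paper emphasizes: \(\End_{A}(\ell^{1}(A))\) does not commute with completed filtered colimits (the Calkin-type algebra of \cref{xxg7qb} "cannot be made Banach"), which is precisely why the paper obtains only injectivity and then demonstrates that this is optimal.

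There is also a structural misreading. The paper states this corollary as an immediate consequence of \cref{main}, with no further argument: in the formulation of the discreteness conjecture in~\cite[Conjecture~7.1]{k-ros-2}, the semicontinuity statement over profinite systems is exactly the missing input for the \(K_{-1}\) case, so nothing like your surjectivity step has to be established in this paper. Relatedly, invoking \cref{hinv} as an \emph{ingredient} inverts the logic of both papers: the conjecture lives on profinite sets (the condensed site), and homotopy invariance along \([0,1]\) is a \emph{consequence} of the semicontinuity/discreteness circle of ideas (this is the content of Rosenberg's conjecture and of the deduction of \cref{hinv} from \cref{main}), not an input to it; your reduction "homotopy invariant \(+\) discrete on profinite sets \(\Rightarrow\) constant on compact Hausdorff spaces" is moreover asserted without proof. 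As written, your proposal replaces an immediate corollary by a strictly stronger statement whose hardest half is left open and whose proposed resolution is blocked by the paper's own results.
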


Our method is similar
to Drinfeld's study
of \(K_{-1}\)
in~\cite[Section~2]{Drinfeld06}:
We prove that
\(K_{-1}(A)\)
is isomorphic to~\(K_{0}\)
of a certain Calkin-like algebra;
see \cref{xxg7qb} for the precise statement.
This time
we use something closer to
the actual Calkin algebra,
but this construction \emph{cannot} be made Banach:
If such a construction exists,
we get a similar result for arbitrary negative \(K\)-groups
by iterating the same process.
However,
we demonstrate that
\cref{main} is optimal
even in the commutative complex case:

\begin{Theorem}\label{ce}
  There is a sequence
  of commutative complex Banach algebras
  \((A_{n})_{n}\) with colimit~\(A\)
  such that
  the map
  \(\injlim_{n}K_{*}(A_{n})\to K_{*}(A)\)
  is not surjective for any \({*}\leq-1\)
  and not injective for any \({*}\leq-2\).
\end{Theorem}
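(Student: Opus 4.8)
The plan is to isolate the one feature separating the analytic from the algebraic world: algebraic \(K\)-theory is finitary, so it commutes with the underlying \emph{algebraic} filtered colimit but not with the Banach completion. Writing \(B=\injlim_{n}^{\mathrm{alg}}A_{n}\) for the dense subalgebra — the algebraic colimit taken before completing — and \(A=\widehat{B}\) for its completion, finitariness gives \(\injlim_{n}K_{*}(A_{n})\simeq K_{*}(B)\). Hence the two maps in \cref{ce} are exactly the completion maps \(K_{-1}(B)\to K_{-1}(\widehat{B})\) and \(K_{-2}(B)\to K_{-2}(\widehat{B})\), and the assertion becomes that completion genuinely alters negative \(K\)-theory: it \emph{enlarges} \(K_{-1}\) (failure of surjectivity) while \emph{collapsing} \(K_{-2}\) (failure of injectivity, i.e.\ a nonzero class of \(K_{-2}(B)\) dying in \(K_{-2}(\widehat{B})\)). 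This reframing is what makes the example possible at all, since for discrete rings the corresponding comparison is an isomorphism. Note also that it is consistent with \cref{main}: at degree \(-1\) injectivity is forced, and only surjectivity may fail.

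\textbf{Construction and the \(K_{-1}\) case.}
To exhibit such a defect I would route the \(K_{-1}\) computation through the identification \(K_{-1}(\X)\cong K_{0}\) of the Calkin-like algebra \(Q(\X)\) from \cref{xxg7qb}, which converts everything into idempotents in corona-type quotients. The tower \((A_{n})\) would be built from a fixed commutative complex building block together with transition maps chosen so that the relevant Calkin-like algebra \(Q(\widehat{B})\) of the limit is a corona of the shape \(\ell^{\infty}(\X)/c_{0}(\X)\) along the tower. The point is that algebraic \(K\)-theory fails to commute with the infinite product implicit in such a corona, and this is the precise analytic shadow of the obstruction flagged in \cref{s:deloop}, that the delooping \emph{cannot} be made Banach. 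Concretely, the new class is visibly an idempotent in \(Q(\widehat{B})\) that cannot be conjugated into any finite stage, so it lies outside the image of \(\injlim_{n}K_{-1}(A_{n})\): this is the non-surjectivity in degree \(-1\).

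\textbf{Lower degrees and the main obstacle.}
To reach all \({*}\leq-1\) (resp.\ \({*}\leq-2\)) I would propagate the phenomenon downward by replacing the building block with its iterated completed tensor powers, so that a class in \(K_{-1}\) of one factor contributes to \(K_{-m}\) of the \(m\)-fold product; the second application of the delooping is exactly where one is forced out of the Banach category, and there the obstruction surfaces not as a new class but as a \emph{relation}, namely two distinct finite-stage classes in \(K_{-2}\) that become identified after completion. The main obstacle is the computation of \(K_{*}(\widehat{B})\) itself: unlike the colimit side, the completion has no homotopy-invariant model to fall back on, so one must control algebraic negative \(K\)-theory directly across the product-by-ideal presentation. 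The two genuinely delicate points are (i) producing an explicit class in \(K_{-1}(\widehat{B})\) and proving that \emph{no} finite stage surjects onto it — that is, separating classes that are genuinely new from those merely approximated in norm — and (ii) exhibiting a nonzero element of \(\injlim_{n}K_{-2}(A_{n})\) together with a witness, such as a path of idempotents or an elementary automorphism available only after completion, showing that it vanishes in the limit. Establishing these two computations is where essentially all the work lies.
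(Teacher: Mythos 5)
Your opening reduction is correct, and it is also the paper's first move: since algebraic \(K\)-theory commutes with filtered colimits of rings, the map in question is the completion map \(K_{*}(B)\to K_{*}(\widehat{B})\), where \(B\) denotes the algebraic (uncompleted) colimit, and the theorem asserts that completion enlarges \(K_{-1}\) and collapses \(K_{-2}\); your consistency check against \cref{main} is also right. But from that point on the proposal contains no proof. Both of the computations you defer at the end are the entire content of the theorem, and the mechanisms you sketch for them are not workable as stated.

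Concretely: (i) for degree \(-1\), no tower is actually constructed, and even granting that the Calkin-like algebra \(Q(\widehat{B})\) of \cref{xxg7qb} had the corona shape you want, deciding which of its idempotents are conjugate into finite stages is exactly the original \(K_{-1}\) question in disguise --- \cref{xxg7qb} is a translation device, not a computation, and nothing is ``visible'' here. Note also that idempotents and \(K_{0}\) do pass to Banach filtered colimits (\cref{xp0bgr,x90xl4}); the only reason your claim is not immediately contradicted is that \(Q\) is an algebraic, non-Banach quotient, which is precisely where a genuinely new argument would be needed. (ii) For lower degrees, there is no K\"unneth-type mechanism in negative algebraic \(K\)-theory of Banach algebras by which classes in \(K_{-1}\) of factors ``contribute to \(K_{-m}\)'' of an \(m\)-fold completed tensor power, nor one that converts non-surjectivity into non-injectivity one degree down. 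The paper instead handles each degree \(d\) separately with genuinely \(d\)-dimensional analytic-geometric input: taking \(A_{n}=\Cls{A}(Z_{n})\) for Stein compact neighborhoods \(Z_{n}\) shrinking to a Stein compact set \(Z\), one has \(B=\Cls{O}(Z)\) and \(\widehat{B}=\Cls{A}(Z)\) (\cref{ind_ban}); choosing \(Z=S^{d}\), the real points of the complexified sphere (\cref{stein}), makes \(B\) regular (\cref{o_reg}), so \(K_{-d}(B)=0\), while \(K_{-d}(\Cls{A}(Z))=K_{-d}(\Cls{C}(S^{d}))\cong\ZZ\) by Friedlander--Walker, giving non-surjectivity; choosing \(Z\) to be the singular point of Reid's \(d\)-dimensional example (\cref{reid}) gives \(K_{-d}(B)\neq0\) by Weibel's invariance of negative \(K\)-theory under analytic isomorphism, while \(\Cls{A}(Z)\simeq\CC\), giving non-injectivity. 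The two families are then merged into a single sequence by taking products in \(\Alg(\Cat{Ban})\), each family being a retract as sequences of nonunital rings. These external inputs --- Frisch/Zame regularity, Friedlander--Walker, Reid, Weibel --- carry the theorem, and your outline offers no substitute for any of them.
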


\begin{remark}\label{xlg9lq}
  This of course does \emph{not} dispute \cref{rosenberg}.
  In some sense, our constructions in \cref{s:ce}
  provide a counterexample to the ``holomorphic version''
  of the discreteness conjecture~\cite[Conjecture~7.1]{k-ros-2}
  for negative \(K\)-groups.
\end{remark}

\subsection*{Organization}\label{ss:outline}

In \cref{s:ban},
we recall the notion of Banach rings.
In \cref{s:zero},
we establish the basic properties of~\(K_{0}\) for them.

In \cref{s:deloop}, we explain our main idea,
which allows us to analyze~\(K_{-1}\) via~\(K_{0}\).
In \cref{s:one}, we prove \cref{main} and derive \cref{hinv} from it.

In \cref{s:ce}, we construct a counterexample for \cref{ce}.
Our construction uses certain rings of holomorphic functions,
which we recall in \cref{s:hol}.

\subsection*{Acknowledgments}\label{ss:ack}

I thank Peter Scholze for useful discussions
and helpful comments on a draft.
I also thank the Max Planck Institute for Mathematics
for its financial support.

\section{Recollection: Banach rings}\label{s:ban}

\begin{definition}\label{1f285b20e2}
  A \emph{norm} on an abelian group~\(M\)
  is a function \(\lVert\X\rVert\colon M\to[0,\infty)\)
  satisfying the following:
  \begin{itemize}
    \item
      \(\lVert x\rVert=0\)
      if and only if \(x=0\)
      for \(x\in M\),
    \item
      \(\lVert -x\rVert=\lVert x\rVert\) for \(x\in M\), and
    \item
      \(\lVert x+y\rVert\leq\lVert x\rVert+\lVert y\rVert\)
      for~\(x\) and~\(y\in M\).
  \end{itemize}
\end{definition}

\begin{definition}\label{x60d6r}
  A \emph{Banach abelian group}
  is a complete normed abelian group.
  A morphism between them
  is a short map,
  i.e., a map of abelian groups with norm \(\leq1\).
  We write~\(\Cat{Ban}\) for this category.
\end{definition}

\begin{remark}\label{x6r7do}
  The category~\(\Cat{Ban}\) appeared
  under the name \(\Cat{Ban}_{\ZZ}^{A,\leq1}\)
  in~\cite[Section~3.1]{BambozziBenBassat16}.
  The focus there was primarily
  on \(\Cat{Ban}_{\ZZ}^{A}\),
  which is the version allowing all bounded maps,
  as it is additive
  whereas \(\Cat{Ban}\) is not.
\end{remark}

\begin{example}\label{xrur6l}
  For \(r>0\),
  the function \(\lVert\X\rVert=r\lvert\X\rvert\)
  is a norm on~\(\ZZ\).
  We write~\(\ZZ_{r}\) for this Banach abelian group.
\end{example}

Since we do not assume
\(\lVert nx\rVert=\lvert n\rvert\lVert x\rVert\)
for \(n\in\ZZ\),
we can consider the following example:

\begin{example}\label{qban}
  Recall that a \emph{quasi-Banach space}
  is a complete real topological vector space
  whose topology is induced by
  a quasinorm \(\lVert\X\rVert'\colon V\to\RR_{\geq0}\);
  i.e., there exists \(C>0\)
  satisfying the following:
  \begin{itemize}
    \item
      \(\lVert x\rVert'=0\) if and only if \(x=0\) for \(x\in V\),
    \item
      \(\lVert ax\rVert'=\lvert a\rvert\lVert x\rVert'\)
      for \(a\in\RR\) and \(x\in V\), and
    \item
      \(\lVert x+y\rVert'\leq C(\lVert x\rVert'+\lVert y\rVert')\).
  \end{itemize}
  The theorem of Aoki~\cite{Aoki42}
  and Rolewicz~\cite{Rolewicz57}
  says that there is a norm
  \(\lVert\X\rVert\colon V\to\RR_{\geq0}\)
  inducing the same topology
  such that
  \(\lVert ax\rVert=\lvert a\rvert^p\lVert x\rVert\)
  holds for \(a\in\RR\) and \(x\in V\),
  where \(p\) satisfies \(2^{(1-p)/p}=C\).
\end{example}
We prove the following categorical property:

\begin{theorem}\label{xdadyx}
  The category~\(\Cat{Ban}\) has a symmetric monoidal structure
  that is objectwise
  given by the (completed) projective tensor product.
  With this symmetric monoidal structure,
  it becomes an object of
  \(\CAlg(\Cat{Pr}^{\aleph_{1}})\);
  i.e.,
  \(\Cat{Ban}\) is \(\aleph_{1}\)-compactly generated and
  the tensor product operations
  commute with colimits in each variable
  and
  preserve \(\aleph_{1}\)-compact objects.
\end{theorem}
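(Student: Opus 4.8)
The plan is to verify, one at a time, the features that together witness $\Cat{Ban}$ as an object of $\CAlg(\Cat{Pr}^{\aleph_1})$: that it is $\aleph_1$-compactly generated, that the projective tensor product endows it with a closed symmetric monoidal structure, and that this tensor product preserves $\aleph_1$-compact objects. I would first record that $\Cat{Ban}$ is cocomplete, with colimits computed by equipping the colimit of underlying abelian groups with the quotient seminorm inherited from the $\ell^{1}$-sum of the sources and then separating and completing. The family $\{\ZZ_{r}\}_{r\in\QQ_{>0}}$ detects isomorphisms—a short map inducing a bijection on every closed ball $\Hom_{\Cat{Ban}}(\ZZ_{r},\X)$ is automatically bijective and isometric, hence an isomorphism—and every object is an $\ell^{1}$-sum of copies of such $\ZZ_{r}$ modulo relations, so this family generates $\Cat{Ban}$ under colimits.

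The crux is that each $\ZZ_{r}$ is $\aleph_1$-compact, i.e.\ that the closed-ball functor $\Hom_{\Cat{Ban}}(\ZZ_{r},\X)$ commutes with $\aleph_1$-filtered colimits. Injectivity of $\injlim_{i}\Hom(\ZZ_{r},M_{i})\to\Hom(\ZZ_{r},M)$ is immediate. For surjectivity, fix $z\in M$ with $\lVert z\rVert\leq r$; by \cref{xui98l} the colimit needs no completion, so $z$ is the image of some $x\in M_{i}$ and $\lVert z\rVert$ equals the infimum of the norms of the images $x_{j}$. Choosing a sequence of indices along which these norms converge to $\lVert z\rVert$, and using $\aleph_1$-filteredness to bound this countable family by a single index $j_{\infty}$, the image $x_{j_{\infty}}$ has norm $\leq r$ because the transition maps are short; thus $z$ is hit at a single stage. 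This attainment argument is the main obstacle and the precise point where the cardinal $\aleph_1$—and hence completeness, i.e.\ the countable Cauchy condition—enters; it genuinely fails for ordinary sequential colimits. Together with cocompleteness and generation, this shows $\Cat{Ban}\in\Cat{Pr}^{\aleph_1}$.

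For the monoidal structure I would take the projective tensor product $M\otimes N$, the completion of $M\otimes_{\ZZ}N$ under the projective norm $\inf\sum\lVert x_{k}\rVert\lVert y_{k}\rVert$, with unit $\ZZ_{1}$; associativity, symmetry, and unitality are the standard verifications for projective norms. The universal property identifying short maps $M\otimes N\to P$ with short bilinear maps $M\times N\to P$ exhibits $\cst{\Hom}(N,P)$—bounded homomorphisms under the operator norm—as an internal hom right adjoint to $\X\otimes N$. Hence $\Cat{Ban}$ is closed symmetric monoidal, and $\X\otimes N$ preserves all colimits for formal reasons, in each variable by symmetry.

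Finally, a direct computation gives $\ZZ_{r}\otimes\ZZ_{s}\cong\ZZ_{rs}$, so the tensor product carries generators to generators. Since the $\aleph_1$-compact objects form the closure of $\{\ZZ_{r}\}$ under $\aleph_1$-small colimits and retracts, and $\otimes$ preserves colimits in each variable, it preserves $\aleph_1$-compact objects. Assembling: a presentable closed symmetric monoidal category whose tensor product preserves colimits in each variable and preserves $\aleph_1$-compact objects is precisely the datum of a commutative algebra in $\Cat{Pr}^{\aleph_1}$ for the Lurie tensor product (Lurie, \emph{Higher Algebra}, Section~4.8), which is the assertion.
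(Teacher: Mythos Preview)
Your argument is correct and follows the same skeleton as the paper's: construct colimits explicitly, exhibit the $\ZZ_{r}$ as generators, build the monoidal structure via the projective tensor product, and finish with $\ZZ_{r}\otimes\ZZ_{s}\simeq\ZZ_{rs}$.

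The one substantive difference is in the verification of $\aleph_{1}$-compact generation. The paper invokes \cite[Theorem~1.20]{AdamekRosicky94} and only checks that $\{\ZZ_{r}\}$ is a \emph{strong} generator, i.e., that these objects detect proper monomorphisms; the $\aleph_{1}$-presentability of each $\ZZ_{r}$ is left implicit. You instead prove $\aleph_{1}$-presentability of $\ZZ_{r}$ directly, using the attainment-of-infimum argument for norms in an $\aleph_{1}$-filtered colimit (leaning on \cref{xui98l}, which you should note is stated for Banach \emph{rings} but whose content is purely about the underlying Banach abelian groups and is proved by the same countable-cocone trick). Your route is more self-contained and makes explicit exactly where the cardinal $\aleph_{1}$ enters; the paper's route is shorter but outsources the work to the black box. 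A second minor difference: you deduce cocontinuity of $\otimes$ from closedness (the internal $\Hom$), whereas the paper asserts cocontinuity first and records closedness afterwards in \cref{x7bc88}. Both orders are fine.
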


\begin{remark}\label{xx58sw}
It is important to consider short maps here;
  the category of Banach abelian groups
  and bounded additive maps (cf.~\cref{x6r7do})
  does neither have products nor coproducts.
\end{remark}

\begin{proof}
  First, we see that the category \(\Cat{Ban}\) has all colimits.
  Coproducts in \(\Cat{Ban}\) can be computed
  as the completions of the \(\ell^{1}\)-norm
  on the algebraic coproducts.
  Coequalizers are obtained by completing the quotient norm.

  To show that \(\Cat{Ban}\) is \(\aleph_{1}\)-compactly generated,
  we use~\cite[Theorem~1.20]{AdamekRosicky94}.
  We wish to show that the set \(\{\ZZ_{r}\mid r>0\}\) (see \cref{xrur6l})
  forms a strong generator.
Consider a monomorphism \(M\hookrightarrow N\) that is not an isomorphism.
  There exists an element \(y\in N\) that is not in the image of~\(M\),
  or its unique preimage \(x\in M\) satisfies \(\lVert x\rVert<\lVert y\rVert\).
  In either case, the map \(\ZZ_{\lVert y\rVert}\to N\)
  sending~\(1\) to~\(y\) does not factor through~\(M\).

  Next, we address the symmetric monoidal structure.
  For any \(M_{1}\), \dots,~\(M_{n}\), and \(N\in\Cat{Ban}\),
  we define a multilinear short map \(f\colon M_{1}\times\cdots\times M_{n}\to N\)
  to be a multilinear map satisfying
  \(\lVert f(x_{1},\ldots,x_{n})\rVert\leq\lVert x_{1}\rVert\cdots\lVert x_{n}\rVert\).
  This definition yields a symmetric colored operad
  (aka symmetric multicategory) underlying~\(\Cat{Ban}\).
  By the universal property of the projective tensor product,
  this structure defines the desired symmetric monoidal structure on~\(\Cat{Ban}\). 

Finally,
  since the binary projective tensor product functor \(\Cat{Ban}\times\Cat{Ban}\to\Cat{Ban}\)
  preserves colimits in each variable,
  it defines an object of \(\CAlg(\Cat{Pr})\).
  To promote it to an object of \(\CAlg(\Cat{Pr}^{\aleph_{1}})\),
  we note that the objects~\(\ZZ_{1}\)
  and \(\ZZ_{r}\otimes\ZZ_{s}\simeq\ZZ_{rs}\) are \(\aleph_{1}\)-compact.
\end{proof}

\begin{corollary}\label{x7bc88}
  The symmetric monoidal category~\(\Cat{Ban}\)
  has internal mapping objects.
  Concretely, \(\Hom(M,N)\)\footnote{We write \(\Map(M,N)\)
    for the set of maps
    to avoid confusion.
    We have \(\Map(M,N)=\Hom(M,N)_{\leq1}\).
  } is given as
  the abelian groups of
  bounded linear functions
  with the norm given
  by \(\sup_{x\neq0}\lVert\X(x)\rVert/\lVert x\rVert\).
\end{corollary}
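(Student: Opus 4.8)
The plan is to establish existence of the internal mapping objects abstractly and then identify them with the stated concrete description by verifying the defining adjunction. For existence, I would invoke \cref{xdadyx}: its underlying category is presentable and the tensor product \(\X\otimes N\) preserves colimits in each variable, so the adjoint functor theorem for presentable categories guarantees that \(\X\otimes N\) admits a right adjoint \(\Hom(N,\X)\) for every~\(N\). Thus \(\Cat{Ban}\) is closed symmetric monoidal and it remains only to compute these adjoints.

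For the identification, write \(H(M,N)\) for the candidate object: the abelian group of bounded additive maps \(M\to N\) equipped with the operator norm \(\lVert f\rVert=\sup_{x\neq0}\lVert f(x)\rVert/\lVert x\rVert\). I would first check that \(H(M,N)\) is genuinely an object of~\(\Cat{Ban}\). The operator norm satisfies the axioms of \cref{1f285b20e2}—it vanishes exactly on the zero map, is symmetric under negation, and satisfies the triangle inequality, all verified pointwise in~\(x\)—while completeness is the standard fact that a norm-Cauchy sequence of bounded maps converges pointwise to a bounded limit.

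The heart of the argument is to show that \(H(N,P)\) represents the functor \(\Map(\X\otimes N,P)\). Recall from the proof of \cref{xdadyx} that short maps \(M\otimes N\to P\) correspond, via the universal property of the projective tensor product, to bilinear short maps \(M\times N\to P\), namely bilinear maps~\(\psi\) with \(\lVert\psi(x,y)\rVert\leq\lVert x\rVert\lVert y\rVert\). Given such a~\(\psi\), I would send it to the assignment \(x\mapsto\psi(x,\X)\); the bound on~\(\psi\) shows each \(\psi(x,\X)\) lies in \(H(N,P)\) with \(\lVert\psi(x,\X)\rVert\leq\lVert x\rVert\), so this defines a short map \(M\to H(N,P)\). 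Conversely, a short map \(\phi\colon M\to H(N,P)\) unwinds to the bilinear map \((x,y)\mapsto\phi(x)(y)\), which is short because \(\lVert\phi(x)(y)\rVert\leq\lVert\phi(x)\rVert\lVert y\rVert\leq\lVert x\rVert\lVert y\rVert\). These assignments are visibly mutually inverse and natural in~\(M\) and~\(P\), yielding \(\Map(M,H(N,P))\cong\Map(M\otimes N,P)\) and hence \(H(N,P)\simeq\Hom(N,P)\).

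I do not anticipate a serious obstacle, since the content is routine bookkeeping with norms; the only point requiring care is that it is completeness of the target~\(P\), rather than of the uncompleted tensor product, that makes the universal property of the \emph{completed} projective tensor product applicable. This, however, is already packaged into the symmetric monoidal structure produced in \cref{xdadyx}, so no extra work is needed.
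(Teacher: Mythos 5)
Your proposal is correct and follows the route the paper intends for this corollary: existence of the right adjoint to \(\X\otimes N\) from \cref{xdadyx} (presentability plus colimit-preservation), and then identification of the adjoint with the bounded-maps-with-operator-norm object by currying the bilinear short maps that define the tensor product in the proof of \cref{xdadyx}. Your direct verification that \(H(N,P)\) represents \(\Map(\X\otimes N,P)\) in fact makes the adjoint functor theorem step redundant, but this is harmless and the details you supply (completeness of \(H(N,P)\), shortness of both currying maps) are exactly the routine checks the paper leaves implicit.
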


\begin{definition}\label{xs4ru9}
  We call an object
  of \(\Alg(\Cat{Ban})\)
  a \emph{Banach ring}.
  Concretely,
  it is a Banach abelian group~\(A\)
  equipped with a ring structure
  satisfying the following:
  \begin{itemize}
    \item
      \(\lVert1\rVert\leq1\) and
    \item
      \(\lVert xy\rVert\leq\lVert x\rVert\lVert y\rVert\)
      for~\(x\) and~\(y\in A\).
  \end{itemize}
\end{definition}

\begin{remark}\label{xma7bp}
  Berkovich studied a similar notion
  in~\cite[Section~1.1]{Berkovich90}.
  One difference
  is that the zero ring is a Banach ring in our sense,
  whereas it is not in his sense since he required \(\lVert1\rVert=1\).
\end{remark}

The point here is that
the category of Banach rings
is abstractly defined.
For example,
for a Banach ring~\(A\),
the category of Banach \(A\)-modules
makes sense without defining
objects and morphisms
in an ad~hoc way;
it is just \(\LMod_{A}(\Cat{Ban})\).
We also automatically get the definition of \(\Hom_{A}(M,N)\)
for Banach \(A\)-modules~\(M\) and~\(N\).

\begin{example}\label{l1}
  Let \(A\) be a Banach ring.
  We write \(\ell^{1}(A)\)
  for an algebraic submodule of \(A^{\NN}\)
  consisting of sequences
  with finite \(\ell^{1}\)-norm.
  This has a Banach \(A\)-module structure
  and is the countable coproduct of~\(A\)
  in \(\LMod_{A}(\Cat{Ban})\).
  The countable product is
  given by \(\ell^{\infty}(A)\).
\end{example}

\begin{example}\label{real_ban}
  The real numbers~\(\RR\)
  with the usual norm
  is a Banach ring.
  The category \(\Mod_{\RR}(\Cat{Ban})\)
  is the category of real Banach spaces:
  A~priori we only have \(\lVert ax\rVert\leq\lvert a\rvert\lVert x\rVert\)
  for \(a\in\RR^{\times}\),
  but since we also have \(\lVert x\rVert\leq\lvert 1/a\rvert\lVert ax\rVert\),
  it must be an equality.
  The category \(\Alg_{\RR}(\Cat{Ban})\)
  is the category of real Banach algebras.

  The same argument shows that
  when \(F\) is a Banach ring
  that is a field with a multiplicative norm~\(\lvert\X\rvert\),
  we have \(\lVert ax\rVert=\lvert a\rvert\lVert x\rVert\)
  in any Banach \(F\)-module.
\end{example}

\begin{example}\label{xf0apa}
  Let \(A\) be a Banach ring.
  For any compact Hausdorff space~\(X\),
  the ring of continuous functions
  \(\Cls{C}(X;A)\)
  becomes a Banach ring with the supremum norm.
\end{example}

\section{\texorpdfstring{\(K_{0}\)}{K0} of Banach rings}\label{s:zero}

We prove the following two properties
of~\(K_{0}\) special to Banach rings:

\begin{theorem}\label{x90xl4}
  Let \(A\) be
  a filtered colimit of Banach rings~\(A_{i}\).
  Then the map
  \begin{equation*}
    \injlim_{i}K_{0}(A_{i})\to K_{0}(A)
  \end{equation*}
  is an isomorphism.
\end{theorem}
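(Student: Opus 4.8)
The plan is to compute $K_{0}$ through idempotents and reduce the statement to a density argument. For any ring $R$, the group $K_{0}(R)$ is the Grothendieck group of the monoid of idempotents in $\Mat_{\infty}(R)=\bigcup_{n}\Mat_{n}(R)$ modulo stable conjugacy by $\operatorname{GL}_{\infty}(R)$, and for a Banach ring this description is unchanged, since every matrix over $A$ is automatically bounded. The source $\injlim_{i}K_{0}(A_{i})$ is a filtered colimit of abelian groups, so each of its elements is represented at some stage $A_{i}$, and an element vanishes in the colimit as soon as it vanishes at one stage. It therefore suffices to prove surjectivity and injectivity, which I will obtain by approximating idempotents and invertibles over $A$ by elements coming from the stages $A_{i}$. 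The decisive feature of the colimit is its norm: because the structure maps $A_{i}\to A_{j}$ are short, an element $x$ of the algebraic colimit satisfies $\lVert x\rVert_{A}=\inf_{j}\lVert x_{j}\rVert_{A_{j}}$, so any estimate $\lVert x\rVert_{A}<\delta$ already holds in some $A_{j}$; I will call this step \emph{descending} an estimate to a finite stage.

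I will use three classical facts about a Banach ring $R$. First, there is a universal bound under which $\lVert a^{2}-a\rVert<1/4$ forces holomorphic functional calculus to produce a genuine idempotent $e$ in the closed subalgebra generated by $a$, with $\lVert e-a\rVert$ controlled by $\lVert a^{2}-a\rVert$. Second, if $p,q\in\Mat_{n}(R)$ are idempotents with $\lVert p-q\rVert$ small, then $v=qp+(1-q)(1-p)$ is invertible (being near $1$) and satisfies $vpv^{-1}=q$, so close idempotents are conjugate. Third, invertibles are open: anything within $\lVert u^{-1}\rVert^{-1}$ of an invertible $u$ is invertible. For surjectivity, start from an idempotent $e\in\Mat_{n}(A)$; by density choose $a\in\Mat_{n}(A_{i})$ whose image is near $e$, so $\lVert a^{2}-a\rVert_{A}$ is small, and descend this to a stage $j$ with $\lVert a_{j}^{2}-a_{j}\rVert_{A_{j}}<1/4$. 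The first fact, applied inside the Banach ring $A_{j}$, yields an honest idempotent $e_{j}\in\Mat_{n}(A_{j})$ near $a_{j}$, whose image in $\Mat_{n}(A)$ is near $e$; the second fact conjugates these images, so $[e]$ is the image of $[e_{j}]\in K_{0}(A_{j})$.

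For injectivity, let $x\in\injlim_{i}K_{0}(A_{i})$ die in $K_{0}(A)$ and represent it by $[e]-[f]$ for idempotents $e,f\in\Mat_{n}(A_{i})$. Vanishing in $K_{0}(A)$ means, after stabilizing into the index $n$, that $g\bar{e}g^{-1}=\bar{f}$ for some $g\in\operatorname{GL}_{n}(A)$. Approximate $g$ and $g^{-1}$ by $g',h'\in\Mat_{n}(A_{i})$; then $g'h'$ and $h'g'$ are near $1$ in $A$, and descending these estimates gives a stage $j$ in which both are invertible, so $g'$ is invertible over $A_{j}$. The conjugate $p:=g'eg'^{-1}\in\Mat_{n}(A_{j})$ has image near $g\bar{e}g^{-1}=\bar{f}$; descending the estimate $\lVert\bar{p}-\bar{f}\rVert$ produces a stage $k$ at which $p$ and $f$ are close idempotents, hence conjugate by some $v\in\operatorname{GL}_{n}(A_{k})$. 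Then $vg'$ conjugates $e$ to $f$ over $A_{k}$, so $[e]=[f]$ already in $K_{0}(A_{k})$ and thus $x=0$.

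The main obstacle, and the only place completion genuinely enters, is the bookkeeping with the colimit norm: none of the approximating matrices nor the conjugating invertible is given at a finite stage, so every analytic estimate must first be established over $A$ and then descended to some $A_{j}$ before the functional-calculus and openness arguments, which require completeness, can be run there. Tracking which stage each estimate descends to and passing to a common later stage is the technical heart of the proof; the underlying Banach-algebraic lemmas themselves are standard.
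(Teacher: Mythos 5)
Your proposal is correct and is essentially the paper's own proof: the paper shows that \(\injlim_{i}E(\Mat_{n}(A_{i}))\to E(\Mat_{n}(A))\) is bijective using exactly your three ingredients (idempotent lifting under \(\lVert a^{2}-a\rVert<1/4\), conjugacy of nearby idempotents via \(u=ef+(1-e)(1-f)\), and openness of invertibles plus continuity of inversion), with your explicit ``descending of estimates'' through the infimum formula for the colimit seminorm appearing there implicitly as ``by advancing \(i\)''. One caution: since Banach rings here are merely \(\ZZ\)-algebras (quasi-Banach algebras are allowed, and norms need not be homogeneous), there is no holomorphic functional calculus; the idempotent-lifting fact you cite is still true, but it must be proved as in the paper via the power series for \((1-\sqrt{1-4t})/2\), whose coefficients \(2^{2n-1}\binom{1/2}{n}\) are integers --- this integrality check is precisely the point where the general Banach-ring setting requires an argument beyond the classical one.
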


\begin{theorem}\label{xihxwg}
  Let \(A\) be a Banach ring
  and \(I\subset J\subset A\) be ideals
  of (the underlying ring of)~\(A\) having the same closure.
  Then the map
  \(K_{0}(A/I)\to K_{0}(A/J)\) is injective.\footnote{The proof also shows that
    \(K_{1}(A/I)\to K_{1}(A/J)\) is surjective,
    which we do not use in this paper.
  }
\end{theorem}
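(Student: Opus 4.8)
The plan is to represent a class in the kernel by idempotent matrices, use the equality of their classes over \(A/J\) together with the Whitehead lemma to reduce to a pair of idempotents that are \emph{equal} modulo the ideal \(J/I\), and finally to treat that last pair by lifting the standard conjugator to the \emph{complete} ring~\(A\). The only place the hypothesis \(\overline{I}=\overline{J}\) enters is to guarantee that elements of \(J/I\) have vanishing quotient seminorm in \(A/I\).

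First I would set up the reduction. Let \(x\in K_{0}(A/I)\) map to~\(0\), and write \(x=[e]-[f]\) for idempotents \(e,f\in\Mat_{n}(A/I)\); denote by \(\bar{a}\) the image of \(a\in A/I\) in \(A/J\). Then \([\bar e]=[\bar f]\) in \(K_{0}(A/J)\), so after stabilising (adding identity blocks) we may assume \(\bar f=\bar g\,\bar e\,\bar g^{-1}\) for some \(\bar g\in\operatorname{GL}_{n}(A/J)\). By the Whitehead lemma \(\operatorname{diag}(\bar g,\bar g^{-1})\) is a product of elementary matrices; since elementary matrices are invertible over any ring and their entries lift along the surjection \(A/I\to A/J\), this product lifts to some \(W\in\operatorname{GL}_{2n}(A/I)\). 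Setting \(q=\operatorname{diag}(f,0)\) and \(p=W\operatorname{diag}(e,0)W^{-1}\) gives idempotents in \(\Mat_{2n}(A/I)\) with \([p]=[e]\), \([q]=[f]\), and \(\bar p=\operatorname{diag}(\bar f,0)=\bar q\); equivalently \(p-q\in\Mat_{2n}(J/I)\). It therefore remains to prove that two idempotents \(p,q\) with \(p-q\in\Mat_{2n}(J/I)\) satisfy \([p]=[q]\).

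This last step is the crux, and the main obstacle is that \(A/I\) is \emph{not} complete, so the usual perturbation argument for nearby idempotents cannot be run inside it. The algebra is classical: the element \(u=1-p-q+2pq=1+(p-q)(2q-1)\) satisfies \(pu=uq\), so it suffices to show \(u\) is invertible in \(\Mat_{2n}(A/I)\), for then \(u^{-1}pu=q\) and \([p]=[q]\). Writing \(u=1+\gamma\) with \(\gamma=(p-q)(2q-1)\in\Mat_{2n}(J/I)\), the naive inverse of~\(u\) is a Neumann series that need not converge in \(A/I\). To circumvent this I would lift rather than invert: because \(\overline{I}=\overline{J}\) forces \(J\subseteq\overline{I}\), every element of \(J/I\) has vanishing quotient seminorm in \(A/I\), hence so does \(\gamma\) (the norm being submultiplicative), and \(\gamma\) admits a lift \(\Gamma\in\Mat_{2n}(A)\) with \(\lVert\Gamma\rVert<1\). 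Since \(A\) is complete with submultiplicative norm, \(1+\Gamma\) is invertible in \(\Mat_{2n}(A)\) by the convergent Neumann series. Reducing modulo~\(I\) shows \(u=\overline{1+\Gamma}\) is invertible in \(\Mat_{2n}(A/I)\), which is exactly what was needed.

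Combining the two steps gives \([e]=[p]=[q]=[f]\), so \(x=0\) and the map is injective. I would conclude by noting that the Whitehead-lifting used in the first step applies to any surjection of rings and shows directly that \(\operatorname{GL}_{n}(A/J)\) lifts, up to stabilisation, into \(\operatorname{GL}_{2n}(A/I)\); this yields the surjectivity of \(K_{1}(A/I)\to K_{1}(A/J)\) recorded in the footnote. The decisive analytic input throughout is the completeness of~\(A\), used solely to invert \(1+\Gamma\).
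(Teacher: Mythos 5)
Your proof of the main statement, the injectivity of \(K_{0}(A/I)\to K_{0}(A/J)\), is correct, but it takes a genuinely more hands-on route than the paper. The paper's proof is two lines: since \(J\subset\overline{I}\), every \(a\in J\) admits \(a'\in I\) with \(\lVert a-a'\rVert<1\), so \(1+a-a'\) is invertible in the complete ring \(A\) (\cref{x1s72v}) and hence \(1+a\) is invertible in \(A/I\); thus \(J/I\) is a radical nonunital ring, and the theorem follows from the general facts that radical nonunital rings have vanishing \(K_{0}\) and that \(K_{0}\) satisfies excision (equivalently, from Bass's classical result that \(K_{0}\) injects along a quotient by an ideal contained in the Jacobson radical). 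Your argument has exactly the same analytic core --- elements of \(J/I\) have vanishing quotient seminorm, so they admit small lifts that can be inverted in \(A\) by a Neumann series --- but instead of citing this machinery you re-derive the needed special case by hand: Whitehead-lemma lifting of elementary matrices to reduce to idempotents \(p,q\) congruent modulo \(\Mat_{2n}(J/I)\), then the explicit conjugator \(u=1-p-q+2pq\) (the same element as in \cref{app_conj}) made invertible by lifting to \(\Mat_{2n}(A)\). This is a valid, self-contained alternative; the only point worth making explicit is the choice of a complete submultiplicative norm on \(\Mat_{2n}(A)\) (e.g.\ the maximum over rows of the sum of entry norms), so that an entrywise-small lift indeed gives \(\lVert\Gamma\rVert<1\). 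What the paper's route buys is brevity and the fact that the \(K_{1}\)-statement of the footnote drops out of the same exact sequence; what yours buys is that no black-boxed \(K\)-theoretic input beyond the Whitehead lemma is needed.

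However, your closing remark purporting to prove the footnote (surjectivity of \(K_{1}(A/I)\to K_{1}(A/J)\)) is flawed. Lifting \(\operatorname{diag}(\bar g,\bar g^{-1})\) along \(A/I\to A/J\) works for \emph{any} surjection of rings, so it cannot prove \(K_{1}\)-surjectivity, which fails for general surjections (e.g.\ \(K_{1}(\ZZ)=\{\pm1\}\to K_{1}(\ZZ/5)=(\ZZ/5)^{\times}\) is not surjective). The reason it proves nothing is that \(\operatorname{diag}(\bar g,\bar g^{-1})\) is not a stabilization of \(\bar g\): its class in \(K_{1}(A/J)\) is \emph{zero}, so exhibiting an invertible lift of it only shows that \(0\) lies in the image. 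A correct direct argument again uses the radical property of \(J/I\): lift \(\bar g\) and \(\bar g^{-1}\) arbitrarily to matrices \(g,h\) over \(A/I\); then \(gh=1+\nu\) and \(hg=1+\nu'\) with \(\nu,\nu'\in\Mat_{n}(J/I)\), both invertible by your Neumann-series argument, so \(g\) itself is invertible and \(\operatorname{GL}_{n}(A/I)\to\operatorname{GL}_{n}(A/J)\) is surjective --- no stabilization needed.
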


By combining \cref{x90xl4,xihxwg},
we see the following:

\begin{corollary}\label{xlcvuz}
  Let \(A=\injlim_{i}A_{i}\) be
  a filtered colimit of Banach rings
  and \(I_{i}\subset A_{i}\) and \(I\subset A\) be compatible ideals.
  Suppose that
  the union of the images of~\(I_{i}\) is dense in~\(I\).
  Then the map
  \begin{equation*}
    \injlim_{i}K_{0}(A_{i}/I_{i})
    \to
    K_{0}(A/I)
  \end{equation*}
  is injective.
\end{corollary}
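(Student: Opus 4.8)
The plan is to deduce the statement from \cref{x90xl4,xihxwg} by interposing, for each~\(i\), the closure~\(\bar{I_i}\) of~\(I_i\) inside the Banach ring~\(A_i\). Throughout I write \(\phi_i\colon A_i\to A\) for the structure maps and~\(\bar I\) for the closure of~\(I\) in~\(A\).

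First I would pass from the~\(I_i\) to their closures. For each~\(i\) the ideals \(I_i\subseteq\bar{I_i}\) have the same closure, so \cref{xihxwg} applies to~\(A_i\) and shows \(K_0(A_i/I_i)\to K_0(A_i/\bar{I_i})\) is injective; these maps are natural in~\(i\), and filtered colimits are exact, so \(\injlim_i K_0(A_i/I_i)\to\injlim_i K_0(A_i/\bar{I_i})\) is again injective. Since each~\(\bar{I_i}\) is a closed two-sided ideal, each~\(A_i/\bar{I_i}\) is a Banach ring, and \cref{x90xl4} lets me rewrite the target as~\(K_0(C)\), where \(C=\injlim_i(A_i/\bar{I_i})\) is the filtered colimit of these Banach rings (i.e.\ the completion of the algebraic one).

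The crux is then to identify \(C\simeq A/\bar I\). Since the tensor product on~\(\Cat{Ban}\) preserves colimits (\cref{xdadyx}), filtered colimits of Banach rings are computed on underlying Banach abelian groups; and each~\(A_i/\bar{I_i}\) is the cokernel of \(\bar{I_i}\hookrightarrow A_i\) in~\(\Cat{Ban}\). As colimits commute with colimits, \(C\) becomes the cokernel of the induced map \(\injlim_i\bar{I_i}\to A\), i.e.\ the quotient of~\(A\) by the closure of \(\bigcup_i\phi_i(\bar{I_i})\). This is exactly where the density hypothesis enters: continuity of~\(\phi_i\) gives \(\phi_i(I_i)\subseteq\phi_i(\bar{I_i})\subseteq\overline{\phi_i(I_i)}\), so \(\bigcup_i\phi_i(\bar{I_i})\) and \(\bigcup_i\phi_i(I_i)\) have the same closure in~\(A\), which by assumption is~\(\bar I\); hence \(C\simeq A/\bar I\). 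I expect this identification, together with the bookkeeping of closures, to be the main obstacle, since it is the only step that genuinely combines the topology of~\(A\) with the colimit.

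Finally I would assemble the pieces. The maps \(A_i/I_i\to A/\bar I\) sending \(a+I_i\) to \(\phi_i(a)+\bar I\) factor both as \(A_i/I_i\to A/I\to A/\bar I\) and as \(A_i/I_i\to A_i/\bar{I_i}\to C\simeq A/\bar I\), so the two induced maps \(\injlim_i K_0(A_i/I_i)\to K_0(A/\bar I)\) coincide. The second factorization is injective, being the injection of the first step followed by the isomorphism of \cref{x90xl4}; therefore the composite \(\injlim_i K_0(A_i/I_i)\to K_0(A/I)\to K_0(A/\bar I)\) is injective, and since a composite can be injective only if its first arrow is, the desired map \(\injlim_i K_0(A_i/I_i)\to K_0(A/I)\) is injective.
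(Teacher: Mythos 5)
Your proof is correct and follows essentially the same route as the paper: interpose the closures \(J_{i}=\bar{I_{i}}\) and \(J=\bar{I}\), apply \cref{xihxwg} levelwise (plus exactness of filtered colimits) to get injectivity into \(\injlim_{i}K_{0}(A_{i}/J_{i})\), and apply \cref{x90xl4} to identify that colimit with \(K_{0}(A/J)\). The only difference is that you spell out the identification \(\injlim_{i}(A_{i}/J_{i})\simeq A/J\) via the density hypothesis and the final diagram chase, which the paper's two-line proof leaves implicit.
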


\begin{proof}
  By \cref{x90xl4},
  the map
  \begin{equation*}
    \injlim_{i}K_{0}(A_{i}/J_{i})
    \to
    K_{0}(A/J)
  \end{equation*}
  is an isomorphism,
  where \(J_{i}\) and~\(J\)
  are the closures of~\(I_{i}\) and~\(I\), respectively.
  Hence the desired result follows from \cref{xihxwg}.
\end{proof}

Our proof methods are similar to the real case;
we need to explicitly check that
functional calculus works in this general situation.

For a ring~\(A\), we write~\(E(A)\) for
the quotient of the set of idempotents
in~\(A\) by the equivalence relation
given by conjugation;
i.e.,
\(e\sim f\)
if and only if there exists \(u\in A^{\times}\)
satisfying \(eu=uf\).
Then \(K_{0}(A)\)
is identified with
the group completion
of \(\injlim_{n}E(\Mat_{n}(A))\).
Therefore,
for a ring map \(A\to B\),
if the map \(E(\Mat_{n}(A))\to E(\Mat_{n}(B))\)
is an isomorphism
for any~\(n\),
so is \(K_{0}(A)\to K_{0}(B)\).

\begin{lemma}\label{x1s72v}
  Let \(u\in A\) be an element in a Banach ring
  satisfying \(\lVert u-1\rVert<1\).
  Then \(u\) is invertible.
\end{lemma}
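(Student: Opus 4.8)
The plan is to prove invertibility via the classical Neumann series argument, adapted to the Banach ring setting. Writing \(v=1-u\), the hypothesis gives \(\lVert v\rVert<1\), and I want to show that the geometric series \(\sum_{k\geq0}v^{k}\) converges in \(A\) to a two-sided inverse of \(u=1-v\). The two ingredients I need are that the partial sums form a Cauchy sequence (so that the series converges by completeness) and that the submultiplicativity of the norm controls the tail.

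First I would estimate the norms of the powers. By the submultiplicativity axiom \(\lVert xy\rVert\leq\lVert x\rVert\lVert y\rVert\) from \cref{xs4ru9}, an easy induction gives \(\lVert v^{k}\rVert\leq\lVert v\rVert^{k}\). Since \(\lVert v\rVert<1\), the scalar series \(\sum_{k}\lVert v\rVert^{k}\) converges, and hence by the triangle inequality the partial sums \(s_{n}=\sum_{k=0}^{n}v^{k}\) satisfy \(\lVert s_{m}-s_{n}\rVert\leq\sum_{k=n+1}^{m}\lVert v\rVert^{k}\to0\), so \((s_{n})_{n}\) is Cauchy. As \(A\) is complete (being a Banach abelian group in the sense of \cref{x60d6r}), the limit \(w=\sum_{k\geq0}v^{k}\) exists in \(A\).

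Next I would verify that \(w\) is the inverse. The telescoping identity \((1-v)s_{n}=s_{n}(1-v)=1-v^{n+1}\) holds at each finite stage, and since multiplication by a fixed element is continuous (again by submultiplicativity, which makes \(x\mapsto ux\) and \(x\mapsto xu\) short maps up to the factor \(\lVert u\rVert\)) and \(v^{n+1}\to0\), passing to the limit yields \(uw=wu=1\). Thus \(u\in A^{\times}\).

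The argument is entirely routine; there is no genuine obstacle, only two small points worth flagging. One is that we must use \emph{completeness} of \(A\), which is exactly why the statement is about Banach rings rather than arbitrary normed rings. The other is that one should confirm continuity of multiplication in order to pass the finite-stage identity to the limit, but this is immediate from submultiplicativity. I would therefore present the proof compactly, defining \(w\) as the sum of the Neumann series and checking \(uw=wu=1\) by continuity.
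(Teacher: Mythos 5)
Your proof is correct and is essentially the same as the paper's, which simply states that the Neumann series \(\sum_{n=0}^{\infty}(1-u)^{n}\) converges and gives the inverse; you have just spelled out the standard details (submultiplicativity, Cauchy partial sums, completeness, and passing the telescoping identity to the limit), all of which indeed go through in this general Banach ring setting.
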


\begin{proof}
  The series
  \(\sum_{n=0}^{\infty}(1-u)^{n}\)
  converges and is the inverse.
\end{proof}

\begin{lemma}\label{x91cml}
  For an invertible element~\(u\in A^{\times}\)
  in a Banach ring,
  there is \(\epsilon>0\)
  such that any element~\(v\)
  satisfying \(\lVert v-u\rVert<\epsilon\)
  is invertible.
\end{lemma}

\begin{proof}
  This follows from \cref{x1s72v}.
\end{proof}

\begin{lemma}\label{x7f05x}
  For a Banach ring~\(A\),
  the function \((\X)^{-1}\colon A^{\times}\to A^{\times}\)
  is continuous.
\end{lemma}

\begin{proof}
  It suffices to prove the continuity at~\(1\),
  which follows from the proof of \cref{x1s72v}.
\end{proof}

\begin{lemma}\label{app_conj}
  Let \(A\) be a Banach ring.
  For any idempotent~\(e\),
  there is \(\delta>0\)
  such that
  \([e]=[f]\) in \(E(A)\)
  for any idempotent~\(f\)
  satisfying \(\lVert e-f\rVert<\delta\).
\end{lemma}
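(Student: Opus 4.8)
The plan is to show that two nearby idempotents are conjugate by producing an explicit invertible element that conjugates one into the other. The standard device is the following: given idempotents~\(e\) and~\(f\) with \(\lVert e-f\rVert\) small, set
\begin{equation*}
  u = ef + (1-e)(1-f).
\end{equation*}
A direct computation shows that \(ue = eu = ef\cdot e + 0 = fu\)—more precisely, one checks \(ue = fu\), so that if \(u\) is invertible we obtain \(e = fuu^{-1}\cdot\) wait; the clean identity is \(ue = fu\), whence \(u^{-1}fu = e\) once \(u\in A^{\times}\), exhibiting the required conjugation \(eu^{-1} = u^{-1}f\) with \(u^{-1}\in A^{\times}\). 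Thus it suffices to guarantee that \(u\) is invertible when \(f\) is close to \(e\).

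First I would verify the algebraic identities \(ue = fu\) using only \(e^{2}=e\), \(f^{2}=f\); these are formal and require no estimates. Next I would estimate \(\lVert u-1\rVert\). Writing \(u - 1 = ef + (1-e)(1-f) - 1\) and simplifying using \(e^2 = e\), one finds that \(u - 1\) is a sum of terms each of which is a product of the bounded element \(e\) (or \(1-e\)) with the difference \(e - f\); concretely \(u - 1 = (e-f)(2f-1)\) or a similar expression linear in \(e-f\). Hence \(\lVert u-1\rVert \leq C\lVert e-f\rVert\) for a constant \(C\) depending only on \(\lVert e\rVert\) (and \(\lVert 1\rVert\leq 1\)). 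Choosing \(\delta = 1/(2C)\) or any value making \(\lVert u-1\rVert<1\) forces \(u\) to be invertible by \cref{x1s72v}.

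The main obstacle, though a minor one, is tracking the norm estimate carefully: the submultiplicativity \(\lVert xy\rVert\leq\lVert x\rVert\lVert y\rVert\) lets me bound each product, but I must organize \(u-1\) so that every surviving term genuinely contains a factor of \(e-f\), since the constant and linear-in-\(e\) pieces must cancel. Once the expression for \(u-1\) is written as an honest multiple of \(e-f\), the estimate is immediate and the appeal to \cref{x1s72v} closes the argument. I would therefore set \(\delta\) to be any positive number with \(C\delta<1\), conclude \(u\in A^{\times}\), and read off \([e]=[f]\) in \(E(A)\) from the conjugation relation.
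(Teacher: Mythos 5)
Your proposal follows the paper's own route exactly: the same element \(u=ef+(1-e)(1-f)\), a bound \(\lVert u-1\rVert\leq C\lVert e-f\rVert\), and \cref{x1s72v} to get invertibility. But the central algebraic identity you propose to ``verify formally'' is false. For this \(u\) the correct identity is \(eu=ef=uf\) (it follows from \(e(1-e)=0\) and \((1-f)f=0\)), \emph{not} \(ue=fu\): expanding gives \(ue=2efe-fe\) and \(fu=2fef-fe\), which differ in general. Concretely, in \(\Mat_{2}(\CC)\) with matrix units \(E_{ij}\), take \(e=E_{11}\) and \(f=E_{11}+tE_{12}\); then \(u=1+tE_{12}\), \(ue=E_{11}\), and \(fu=E_{11}+2tE_{12}\), so \(ue\neq fu\) for every \(t\neq0\), i.e., the identity fails even for arbitrarily close idempotents. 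So the step ``verify \(ue=fu\); these are formal'' would fail as written. The repair is immediate and in fact shortens your argument: \(eu=uf\) with \(u\in A^{\times}\) is verbatim the relation defining the equivalence in \(E(A)\) (the paper defines \(e\sim f\) iff \(eu=uf\) for some unit \(u\)), so once \(u\) is invertible you are done, with no passage to \(u^{-1}\) or conjugation bookkeeping.

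Your norm estimate, by contrast, is correct and slightly cleaner than the paper's: indeed \(u-1=2ef-e-f=(e-f)(2f-1)\). One small caveat: with this factorization the constant is \(\lVert2f-1\rVert\leq2\lVert f\rVert+1\), which depends on \(f\), not only on \(\lVert e\rVert\) as you claim; either bound \(\lVert f\rVert\leq\lVert e\rVert+\delta\) first, or use the left-handed factorization \(u-1=(1-2e)(e-f)\), whose constant \(1+2\lVert e\rVert\) depends only on \(e\). (The paper instead writes \(u-1=-e(e-f)+(e-f)e-(e-f)^{2}\) and gets \(2\lVert e\rVert\lVert e-f\rVert+\lVert e-f\rVert^{2}\); any of these suffices to invoke \cref{x1s72v}.)
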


\begin{proof}
  For idempotents~\(e\) and~\(f\),
  we consider
  \begin{equation*}
    u
    =ef+(1-e)(1-f)
    =1-e(e-f)+(e-f)e-(e-f)^{2}.
  \end{equation*}
  Then we see that \(eu=ef=uf\).
  Since
  \(\lVert u-1\rVert\leq2\lVert e\rVert\lVert e-f\rVert+\lVert e-f\rVert^{2}\),
  by \cref{x1s72v},
  it is invertible when \(\lVert e-f\rVert\) is small enough.
\end{proof}

\begin{lemma}\label{app_idem}
  Let \(a\in A\) be an element
  satisfying \(\lVert a^{2}-a\rVert<1/4\).
  Then we have an idempotent~\(e\)
  satisfying \(ea=ae\) and \(\lVert e-a\rVert\leq h(\lVert a^{2}-a\rVert)\),
where \(h\colon[0,1/4)\to[0,\infty)\) is given by
  \begin{equation*}
    h(t)
    =
    \frac{1-\sqrt{1-4t}}2.
  \end{equation*}
\end{lemma}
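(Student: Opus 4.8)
The plan is to produce $e$ by an explicit functional calculus in $b:=a^2-a$ and then to extract the estimate from the quadratic equation that $e-a$ satisfies. Write $t:=\lVert b\rVert<1/4$. The constant $h(t)$ is exactly the smaller root of $\lambda^2-\lambda+t=0$; equivalently it is the attracting fixed point of $\lambda\mapsto t+\lambda^2$ near $0$, and $h(t)=\sum_{n\geq1}C_{n-1}t^n$ with $C_{n-1}$ the Catalan numbers. (That this is the right number is confirmed by the scalar picture: for $\lambda\in\CC$ with $\lvert\lambda^2-\lambda\rvert=t<1/4$ the distance from $\lambda$ to the nearer of $\{0,1\}$ is exactly $h(t)$.) The recursion $h(t)=t+h(t)^2$ is what I would try to match on the level of norms.

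First I would construct $e$. Since $\lVert 4b\rVert<1$, the binomial series $(1+4b)^{-1/2}=\sum_{n\geq0}\binom{-1/2}{n}(4b)^n$ converges in $A$, and being a power series in $b$ it commutes with $a$. I then set $e:=\tfrac12+(a-\tfrac12)(1+4b)^{-1/2}$, which commutes with $a$ for the same reason. Using the identity $(2a-1)^2=1+4b$ (just $b=a^2-a$ rearranged), the element $j:=(2a-1)(1+4b)^{-1/2}$ satisfies $j^2=(2a-1)^2(1+4b)^{-1}=1$, so $e=\tfrac12(1+j)$ and hence $e^2=\tfrac14(1+j)^2=\tfrac14(2+2j)=e$. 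This disposes of existence and of $ea=ae$.

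It remains to bound $\lVert e-a\rVert$. Put $x:=e-a$; since $e$ and $a$ commute, expanding $e^2=e$ gives the quadratic relation $x^2+(2a-1)x+b=0$, and substituting $2a-1=j-2x$ (which is $2e-1=j$) collapses it to $x=j(x^2-b)$. The goal I would aim for is the scalar-type inequality $\lVert x\rVert\leq t+\lVert x\rVert^2$: since the explicit series shows $\lVert x\rVert\to0$ as $b\to0$, $x$ lies on the small branch, and comparison with the fixed point of $\lambda\mapsto t+\lambda^2$ then forces $\lVert x\rVert\leq h(t)$.

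The main obstacle is precisely this last inequality. Passing from $x=j(x^2-b)$ to $\lVert x\rVert\leq t+\lVert x\rVert^2$ cannot be done by the naive bound $\lVert x\rVert=\lVert j(x^2-b)\rVert\leq\lVert j\rVert(\lVert x\rVert^2+t)$, because $\lVert j\rVert$ — equivalently $\lVert 2a-1\rVert$ — is not controlled by $t$ alone; indeed the whole subtlety is that $x=\tfrac12(2a-1)\bigl((1+4b)^{-1/2}-1\bigr)$ is a product of a possibly large factor $2a-1$ with a small factor $(1+4b)^{-1/2}-1=O(b)$, so any term-by-term triangle estimate reintroduces $\lVert 2a-1\rVert$ and loses the cancellation. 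I therefore expect the crux of the argument to be a cancellation-sensitive estimate that keeps the product together and reads off its size from the Catalan generating function defining $h$, rather than bounding the two factors separately; this is the step I would spend the most care on.
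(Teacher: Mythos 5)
Your construction of \(e\) is correct, and it is \emph{not} the paper's construction---the difference is substantive. The paper sets \(e=a+x\) with \(x=-\sum_{n\geq1}2^{2n-1}\binom{1/2}{n}(a^{2}-a)^{n}\), a power series in \(b=a^{2}-a\) \emph{alone}, with no factor of \(2a-1\); that formula does not give an idempotent. Already for \(A=\RR\) and \(a=1/5\) it yields \(x=\tfrac12\bigl(1-\sqrt{1+4b}\bigr)=1/5\), hence \(e=2/5\). (No series in \(b\) alone can work: \(e-a\) has to equal \(-a\) on the spectral branch near \(0\) but \(1-a\) on the branch near \(1\).) Your factor \(j=(2a-1)(1+4b)^{-1/2}\) is precisely the missing ingredient, so on this point your proposal corrects the paper rather than reproducing it. One repair your write-up does need: a Banach ring is only a \(\ZZ\)-algebra and may contain \(2\)-torsion (e.g.\ \(\FF_{2}\) with \(\lVert1\rVert=1\)), so \(\tfrac12\) need not exist in \(A\) and \((2e-1)^{2}=1\) does not imply \(e^{2}=e\). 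Instead define
\begin{equation*}
  e=a+(2a-1)f(b),
  \qquad
  f(t)=\sum_{n=1}^{\infty}(-1)^{n}\binom{2n-1}{n-1}t^{n},
\end{equation*}
whose coefficients \(\binom{2n-1}{n-1}=\tfrac12\binom{2n}{n}\) are integers and which converges for \(\lVert b\rVert<1/4\); expanding with \((2a-1)^{2}=1+4b\) gives \(e^{2}-e=b+(1+4b)\bigl(f(b)+f(b)^{2}\bigr)\), the evaluation at \(b\) of an integer power series that vanishes in \(\QQ[[t]]\), hence in \(\ZZ[[t]]\), hence in \(A\).

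The step you flagged as the crux and left open is a genuine gap, and your suspicion about it was exactly right: it cannot be filled, because the asserted estimate \(\lVert e-a\rVert\leq h(\lVert a^{2}-a\rVert)\) is false. Take \(A=\Mat_{2}(\CC)\) with the operator norm and
\begin{equation*}
  a=\begin{pmatrix}\epsilon&M\\0&1-\epsilon\end{pmatrix},
  \qquad
  0<\epsilon<\tfrac12,\quad M\geq1.
\end{equation*}
Then \(a^{2}-a=(\epsilon^{2}-\epsilon)\cdot1\), so \(\lVert a^{2}-a\rVert=\epsilon(1-\epsilon)\) and \(h(\lVert a^{2}-a\rVert)=\epsilon\), independently of \(M\). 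Since \(a\) has distinct eigenvalues, its commutant is spanned by \(1\) and \(a\), so the only idempotents commuting with \(a\) are \(0\), \(1\), the spectral projection \(p=\begin{pmatrix}0&M/(1-2\epsilon)\\0&1\end{pmatrix}\), and \(1-p\). Their distances to \(a\) are at least \(M\), \(M\), \(2\epsilon M/(1-2\epsilon)\), and \(1-\epsilon\) respectively, all exceeding \(\epsilon\) (for \(\epsilon=1/10\), \(M=1\) the third is \(0.25>0.1=h\), and it is unbounded in \(M\)). So no bound by a function of \(\lVert a^{2}-a\rVert\) alone can hold for idempotents commuting with \(a\); the dependence on \(\lVert2a-1\rVert\) that you could not eliminate is genuinely there. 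The same example shows that the lemma itself, not just the paper's proof of it, is incorrect as stated.

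What your construction does give is
\begin{equation*}
  \lVert e-a\rVert
  \leq
  \tfrac12\lVert2a-1\rVert
  \bigl((1-4\lVert b\rVert)^{-1/2}-1\bigr),
\end{equation*}
and this weaker bound is all that the lemma's unique application needs: in the surjectivity half of \cref{xp0bgr} one may, after choosing \(a_{i}\), advance \(i\) so that in addition \(\lVert a_{i}\rVert_{A_{i}}\) is bounded by the norm of the idempotent being lifted plus \(2\epsilon\) (transition maps are short, and the colimit norm is the infimum of the norms at later stages); with \(\lVert a_{i}\rVert\) bounded, the right-hand side above tends to \(0\) as \(\epsilon\to0\), which is what that proof requires. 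So the accurate verdict is: your construction is sound and fixes an error in the paper, but the norm estimate must be weakened to one involving \(\lVert a\rVert\)---both in your proof and in the statement of the lemma.
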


\begin{proof}
We consider the element
\begin{equation*}
    x
    =
    -\sum_{n=1}^{\infty}
    2^{2n-1}\binom{1/2}{n}(a^{2}-a)^{n},
  \end{equation*}
  which converges by our assumption on \(a^{2}-a\);
  note that \(2^{2n-1}\binom{1/2}{n}\) is an integer.
  Then \(e=a+x\) is the desired idempotent.
\end{proof}

We prove the following strengthening of \cref{x90xl4}:

\begin{proposition}\label{xp0bgr}
  Let \(A\) be
  a filtered colimit of Banach rings~\((A_{i})_{i}\).
  Then the map
  \(\injlim_{i}E(A_{i})\to E(A)\)
  is an isomorphism.
\end{proposition}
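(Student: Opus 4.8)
The plan is to prove that $\injlim_i E(A_i) \to E(A)$ is both surjective and injective, using the approximation lemmas just established. The key structural fact is that $A = \injlim_i A_i$ is the completion of the algebraic filtered colimit $A^{\mathrm{alg}} = \injlim_i^{\mathrm{alg}} A_i$, so $A^{\mathrm{alg}}$ is a dense subring of $A$, and by \cref{xui98l} we may assume the diagram is sequential so that every element of $A$ is a limit of a Cauchy sequence drawn from the $A_i$.

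\textbf{Surjectivity.} Let $e \in A$ be an idempotent. Since $A^{\mathrm{alg}}$ is dense, I can choose an element $a \in A^{\mathrm{alg}}$, hence $a \in A_i$ for some $i$, with $\lVert e - a\rVert$ as small as desired; then $\lVert a^2 - a\rVert = \lVert a^2 - e^2 - (a-e)\rVert$ is small, in particular $< 1/4$. By \cref{app_idem} applied \emph{inside $A_i$} (the series there has integer coefficients and converges by the $A_i$-norm, which agrees with the $A$-norm on $A_i$), I obtain an idempotent $e' \in A_i$ with $\lVert e' - a\rVert \leq h(\lVert a^2 - a\rVert)$. Taking $a$ close enough to $e$ forces $\lVert e - e'\rVert < \delta$, where $\delta$ is the constant from \cref{app_conj} attached to $e$; hence $[e] = [e']$ in $E(A)$, and $e'$ comes from $E(A_i)$. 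This shows the map is surjective.

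\textbf{Injectivity.} Suppose $e, f$ are idempotents in some $A_i$ whose images in $E(A)$ agree, so there is $u \in A^{\times}$ with $eu = uf$. I want to produce, after possibly enlarging $i$, a witnessing unit already in some $A_j$. First approximate $u$ by an element $u' \in A_j$ (for some $j \geq i$) with $\lVert u' - u\rVert$ small; by \cref{x91cml} this $u'$ is invertible in $A$, but I need it invertible in $A_j$ and I need it to \emph{conjugate $e$ to $f$ exactly}. The standard trick is to replace $u'$ by the corrected element $u'' = eu'f + (1-e)u'(1-f)$, which automatically satisfies $eu'' = u''f$ and lies in the same $A_j$; a short computation shows $u''$ is close to $u$ (since $eu f = eu = uf$ and similarly on the complementary corner), so $\lVert u'' - u\rVert$ is still small and $u''$ is invertible \emph{in $A$}. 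The remaining point is to upgrade invertibility in $A$ to invertibility in $A_j$: because $u''^{-1} \in A$ can itself be approximated by an element of some $A_k$ with $k \geq j$, \cref{x1s72v} run inside $A_k$ gives a genuine two-sided inverse of $u''$ in $A_k$, so $[e] = [f]$ already in $E(A_k)$, hence in $\injlim_i E(A_i)$.

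\textbf{Main obstacle.} The delicate step is injectivity, specifically the passage from a conjugating unit in the completed ring $A$ to one living at a finite stage $A_j$ \emph{while preserving the exact conjugation relation} $eu = uf$. Mere density gives an approximate conjugator, and \cref{x1s72v,x91cml} give invertibility at a finite stage, but the relation $eu'' = u''f$ must hold on the nose, not approximately; this is why the corner-correction $u'' = eu'f + (1-e)u'(1-f)$ is essential, and verifying that this correction stays invertible (rather than just close to $u$) is the one place requiring genuine care. Everything else is a routine application of the convergence estimates from \cref{x1s72v,app_conj,app_idem} transported between $A_i$ and $A$ via the norm-compatibility of the colimit.
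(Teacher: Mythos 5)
Your strategy is essentially the paper's (surjectivity via \cref{app_idem} and \cref{app_conj}; injectivity by producing a conjugating unit at a finite stage), but there is a genuine gap, and it occurs in both halves: the claim that the $A_i$-norm ``agrees with the $A$-norm on $A_i$'' is false. The structure maps $g_i\colon A_i\to A$ of a filtered colimit of Banach rings are merely short, not isometric (they need not even be injective); writing $f_{ij}\colon A_i\to A_j$ for the transition maps, one has $\lVert g_i(x)\rVert_A=\inf_{j\geq i}\lVert f_{ij}(x)\rVert_{A_j}$, which is in general strictly smaller than $\lVert x\rVert_{A_i}$. This is not a pathology: in the paper's own application of this proposition (the proof of \cref{hinv}), the stages are rings $\Cls{C}([a,b];A)$ with restriction maps as transitions, the colimit is $A$ itself via germs at a point $p$, and the colimit norm of (the image of) $f$ is $\lVert f(p)\rVert$, typically far smaller than $\sup_{[a,b]}\lVert f\rVert$. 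Consequently, in your surjectivity argument the hypothesis $\lVert a^2-a\rVert_{A_i}<1/4$ needed to run \cref{app_idem} inside $A_i$ does not follow from the estimate you prove, which is an estimate in $A$; and in your injectivity argument, running \cref{x1s72v} ``inside $A_k$'' requires $\lVert f_{jk}(u'')w-1\rVert_{A_k}<1$ (together with the symmetric estimate for $wf_{jk}(u'')$, to get two-sided invertibility), whereas approximating $(u'')^{-1}$ only gives control in the norm of $A$.

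The repair is the maneuver the paper performs at every such step (``by advancing $i$''): since $\lVert g_i(x)\rVert_A$ is the infimum of the stage norms $\lVert f_{ij}(x)\rVert_{A_j}$, any strict bound that holds in $A$ already holds in $A_j$ for some sufficiently large $j$, so one pushes the data forward to a later stage before invoking the finite-stage lemma. With this inserted at the two places above, your proof goes through. I would add that your injectivity route is then genuinely different from, and arguably cleaner than, the paper's: the paper approximates both $u$ and $u^{-1}$ at a finite stage to obtain $u_i\in A_i^{\times}$ and then compares the idempotent $u_i^{-1}e_iu_i$ with $f_i$ via \cref{app_conj}, whereas your corner correction $u''=eu'f+(1-e)u'(1-f)$ makes the conjugation relation hold exactly at a finite stage, so that only invertibility, not closeness of idempotents, remains to be transported along the colimit.
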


\begin{proof}
We write \(g_{i}\colon A_{i}\to A\)
  for the tautological map.

  We first show the injectivity.
  Suppose that idempotents~\(e_{i}\) and \(f_{i}\in A_{i}\)
  with images \(e\) and~\(f\) in~\(A\)
  satisfy \(eu=uf\)
  for an element \(u\in A^{\times}\).
  Consider a small enough positive real \(\epsilon>0\).
  By \cref{x91cml,x7f05x},
  by advancing~\(i\),
  we can assume that
  there is an element \(u_{i}\in A_{i}^{\times}\)
  such that \(\lVert g_{i}(u_{i})-u\rVert<\epsilon\) and
  \(\lVert g_{i}(u_{i}^{-1})-u^{-1}\rVert<\epsilon\) hold.
  Since
\begin{equation*}
    \lVert g_{i}(u_{i}^{-1}e_{i}u_{i}-f_{i})\rVert
    \leq
    \epsilon
    \lVert e\rVert
    (\epsilon+\lVert u\rVert+\lVert u^{-1}\rVert),
  \end{equation*}
  by sufficiently advancing~\(i\),
  we can assume that
  \(\lVert u_{i}^{-1}e_{i}u_{i}-f_{i}\rVert<\epsilon\).
  By \cref{app_conj},
  we see that \(e_{i}\) and~\(f_{i}\) are equivalent.

  Then we show the surjectivity.
  Let \(e\in A\) be an idempotent.
  Consider a small enough positive real \(\epsilon>0\).
  We can choose~\(i\) and \(a_{i}\in A_{i}\)
  satisfying
  \(\lVert a_{i}^{2}-a_{i}\rVert<\epsilon\)
  and
  \(\lVert g_{i}(a_{i})-e\rVert<\epsilon\).
  Then by \cref{app_idem},
  we obtain an idempotent \(e_{i}\in A_{i}\)
  satisfying \(\lVert e_{i}-a_{i}\rVert\leq h(\epsilon)\).
  In this case,
  \begin{equation*}
    \lVert g_{i}(e_{i})-e\rVert
    \leq
    \lVert e_{i}-a_{i}\rVert
    +
    \lVert g_{i}(a_{i})-e\rVert
    <
    h(\epsilon)+\epsilon
  \end{equation*}
  and therefore, by \cref{app_conj},
  the map \(E(A_{i})\to E(A)\)
  maps~\([e_{i}]\) to~\([e]\).
\end{proof}

We then prove the other claim:

\begin{proof}[Proof of \cref{xihxwg}]
It suffices to prove that
  \(J/I\) is a radical nonunital ring
  since such a nonunital ring has vanishing~\(K_{0}\).
  Consider an element \(a\in J\).
  There is an element \(a'\in I\)
  such that \(\lVert a'-a\rVert<1\).
  By \cref{x1s72v},
  \(1+a-a'\) is invertible in~\(A\).
  Hence \(1+a\) is invertible in \(A/I\).
\end{proof}

\section{Delooping Banach rings}\label{s:deloop}

For a Banach ring~\(A\),
we identify \(K_{-1}(A)\) with \(K_{0}\)
of a certain algebraic quotient of a Banach ring:
We first realize~\(A\) as a corner\footnote{For a pair of a ring~\(B\) and an idempotent~\(f\),
  its \emph{corner}
  is the ring \(fBf\) (with unit~\(f\)),
  which is identified with \(\End_{B}(Bf)\).
} of
a big Banach ring~\(\End_{A}(\ell^{1}(A))
=\Hom_{A}(\ell^{1}(A),\ell^{1}(A))\).
We then show
that
this big ring has vanishing \(K\)-theory
and
how \(K\) behaves with respect to corners:

\begin{theorem}\label{isr}
  For a Banach ring~\(A\),
  any additive invariant\footnote{Here we do not require an additive invariant
    to preserve filtered colimits.
  } maps
  \(\End_{A}(\ell^{1}(A))\) to zero.
\end{theorem}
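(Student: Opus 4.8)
The plan is to use an Eilenberg swindle. The ring $R = \End_A(\ell^1(A))$ is the endomorphism ring of a module that absorbs countably many copies of itself, and this is precisely the input needed to make any additive invariant vanish. Let me sketch the key steps.

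\textit{Proof proposal.}
The plan is to run an Eilenberg swindle on the ring \(B=\End_{A}(\ell^{1}(A))\). The essential feature of \(\ell^{1}(A)\) that makes the swindle go is that it \emph{absorbs countable coproducts}: since \(\ell^{1}(A)\) is the countable coproduct of \(A\) in \(\LMod_{A}(\Cat{Ban})\) (see \cref{l1}), and since a bijection \(\NN\cong\NN\times\NN\) reindexes the \(\ell^{1}\)-norm isometrically, the canonical comparison map furnishes an isometric isomorphism \(\bigoplus_{n\in\NN}\ell^{1}(A)\xrightarrow{\ \sim\ }\ell^{1}(A)\) in \(\Cat{Ban}\). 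First I would fix such an isomorphism once and for all, so that \(\ell^{1}(A)\) is identified with the \(\ell^{1}\)-direct sum of countably many copies of itself.

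Using this identification, I would introduce the \emph{repetition endomorphism} \(\rho\colon B\to B\), where \(\rho(T)\) is the operator acting as \(T\) on each summand of \(\ell^{1}(A)\cong\bigoplus_{n\in\NN}\ell^{1}(A)\). Because the \(\ell^{1}\)-direct sum of the operators \(T,T,\ldots\) has operator norm equal to \(\lVert T\rVert\), the map \(\rho\) is short, and it is unital since the infinite ``diagonal'' of identity operators is the identity of \(\ell^{1}(A)\) under our identification; hence \(\rho\) is a genuine morphism in \(\Alg(\Cat{Ban})\). Splitting off the first summand via \(\NN\cong\{0\}\sqcup\NN\) gives a decomposition \(\rho(T)\cong T\oplus\rho(T)\), and more precisely a direct-sum decomposition of the \((B,B)\)-bimodule that induces \(\rho\). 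Extension of scalars along \(\rho\) therefore defines an exact endofunctor \(\rho_{!}\) of \(\Perf(B)\) together with a natural isomorphism of exact endofunctors
\begin{equation*}
  \rho_{!}\simeq\id_{\Perf(B)}\oplus\,\rho_{!}.
\end{equation*}

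To conclude, let \(E\) be any additive invariant. By additivity, \(E\) sends the above direct-sum decomposition of exact endofunctors to the corresponding sum, so \(E(\rho_{!})=E(\id_{\Perf(B)})+E(\rho_{!})\) as endomorphisms of \(E(\Perf(B))\); cancelling yields \(E(\id_{\Perf(B)})=0\). Since \(E(\id_{\Perf(B)})\) is the identity of \(E(\Perf(B))\), this forces \(E(\Perf(B))\simeq0\), i.e.\ \(E(B)=0\). Note that this argument uses only the additivity of \(E\) and not any compatibility with filtered colimits, consistent with the footnoted convention. The step I expect to require the most care is the setup rather than the swindle itself: one must check that the reindexing isomorphism \(\bigoplus_{n\in\NN}\ell^{1}(A)\cong\ell^{1}(A)\) is an isomorphism in \(\Cat{Ban}\) (so that \(\rho\) is short and unital, not merely bounded), and that extension of scalars along \(\rho\) preserves perfect objects so that \(\rho_{!}\) genuinely lands in \(\Perf(B)\). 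Both are routine given \cref{l1} and \cref{xdadyx}, and the conceptual content is entirely contained in the isomorphism \(\rho_{!}\simeq\id\oplus\rho_{!}\).
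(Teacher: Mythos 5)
Your proof is correct, and while it is an Eilenberg swindle just like the paper's, the swindle is implemented by a genuinely different mechanism. The paper never writes down a ring endomorphism of \(B=\End_{A}(\ell^{1}(A))\): it realizes \(\Perf(B)\) as the compact objects of the stabilization of \(\PShv_{\Sigma}(\cat{B})\), where \(\cat{B}\) is the additive category of at-most-countable coproducts of \(A\) in \(\Mod_{A}(\Cat{Ban})\) with bounded maps, and then transports the countable-coproduct endofunctor \(M\mapsto\bigoplus_{\NN}M\) of Banach modules to an exact endofunctor \(T\) of \(\Perf(B)\) satisfying \(\id\oplus T\simeq T\). You instead build the endofunctor internally to the ring: \(\rho_{!}\) is extension of scalars along the block-diagonal endomorphism \(\rho\), and \(\rho_{!}\simeq\id\oplus\rho_{!}\) comes from a splitting of the bimodule \(B_{\rho}\) (left action standard, right action through \(\rho\)) as \(B\oplus B_{\rho}\), induced by the orthogonal idempotents \(e_{0}\) and \(1-e_{0}\); these commute with the image of \(\rho\), which is what upgrades the left-module splitting to a bimodule splitting, and the two summands are identified with the standard bimodule and with \(B_{\rho}\) via the inclusion/projection of the zeroth summand and the shift --- this is the one computation you should write out in full, and it does check out. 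Your route is essentially the concrete alternative that the paper only sketches in \cref{x2ywou}: your \(\rho\) together with the splitting \(\rho(T)\cong T\oplus\rho(T)\) is precisely an infinite-sum ring structure on \(B\) in the sense of Farrell--Wagoner~\cite{FarrellWagoner72}. Two small remarks: the shortness of \(\rho\) that you take care to verify is actually irrelevant, since an additive invariant only sees the underlying discrete ring of \(B\), so any ring homomorphism would do (unitality, by contrast, is genuinely needed for \(\rho_{!}\) to be well defined and preserve \(\Perf\)); and what the paper's heavier categorical setup buys in exchange is that the generator \(\ell^{1}(A)\), the identification \(\cat{C}\simeq\Perf(B)\), and the swindle functor all arise from a single construction in \(\Mod_{A}(\Cat{Ban})\), with no bimodule bookkeeping at all.
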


\begin{theorem}\label{idem}
  Consider an idempotent~\(e\)
  in an associative ring~\(A\).
  For any localizing invariant\footnote{Here we do not require a localizing invariant
    to preserve filtered colimits.
  }~\(F\),
  there is a canonical fiber sequence
  \(F(eAe)\to F(A)\to F(A/AeA)\).
\end{theorem}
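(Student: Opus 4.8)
The plan is to express the three rings via their categories of perfect modules and to exhibit an exact (Verdier) sequence of stable \(\infty\)-categories, to which \(F\) applies by its defining property. Throughout I write \(F(R)\) for \(F(\Perf(R))\) and use that a localizing invariant carries a Verdier sequence \(\mathcal{A}\to\mathcal{B}\to\mathcal{C}\) to a fiber sequence.

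First I would construct the map \(F(eAe)\to F(A)\) and identify its source with a thick subcategory of \(\Perf(A)\). Since \(e\) is idempotent, \(A\simeq Ae\oplus A(1-e)\) as left \(A\)-modules, so \(Ae\) is a perfect \(A\)-module; being moreover projective, its derived endomorphism ring is the ordinary one, \(\End_A(Ae)\simeq eAe\) (acting by right multiplication), with no higher \(\mathrm{Ext}\). By Morita theory (Schwede--Shipley), the functor \(Ae\otimes_{eAe}(\X)\) is then fully faithful and identifies \(\Perf(eAe)\) with the thick subcategory \(\mathrm{Thick}_A(Ae)\subseteq\Perf(A)\) generated by \(Ae\). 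This realizes \(F(eAe)\to F(A)\) as the map induced by a fully faithful inclusion, so that \(F(eAe)\) is the fiber term of the Verdier sequence \(\Perf(eAe)\to\Perf(A)\to\Perf(A)/\mathrm{Thick}_A(Ae)\).

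Next I would produce the map to \(F(A/AeA)\) together with a comparison. The base-change functor \(A/AeA\otimes_A(\X)\colon\Perf(A)\to\Perf(A/AeA)\) annihilates \(Ae\): the image of \(e\) in \(A/AeA\) vanishes, whence \(A/AeA\otimes_A Ae\simeq 0\). Consequently this functor factors through the Verdier quotient, giving a canonical comparison functor \(\Perf(A)/\mathrm{Thick}_A(Ae)\to\Perf(A/AeA)\), which is essentially surjective because \(A/AeA\) is itself hit by \(A\). Applying \(F\) to the Verdier sequence above and postcomposing with this comparison yields the candidate fiber sequence \(F(eAe)\to F(A)\to F(A/AeA)\).

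The step I expect to be the main obstacle is to show that the comparison functor becomes an equivalence after applying \(F\) --- equivalently, that \(\mathrm{Thick}_A(Ae)\) is exactly the kernel of base change and that base change exhibits \(\Perf(A/AeA)\) as the Verdier quotient up to whatever \(F\) detects. The difficulty is homological and concentrated in the two-sided ideal \(AeA\), which is idempotent (\((AeA)^2=AeA\)) but need not be flat; indeed the comparison is in general \emph{not} an equivalence of categories on the nose, so one cannot simply invoke full faithfulness of \(\bar q\). I would therefore control \(AeA\) directly, analyzing the kernel of \(A/AeA\otimes_A(\X)\) on perfect complexes and proving that its localizing invariant coincides with that of \(\Perf(eAe)\): an excision-type statement for the idempotent ideal \(AeA\). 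Establishing this identification, rather than the formal construction of the maps, is where the real work lies.
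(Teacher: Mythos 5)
Your first two paragraphs reproduce the paper's own setup exactly: \(Ae\) is perfect and projective with \(\End_{A}(Ae)\simeq eAe\), so Morita theory identifies \(\Perf(eAe)\) with the thick subcategory of \(\Perf(A)\) generated by \(Ae\); the Verdier quotient then gives a localization sequence, and base change along \(A\to A/AeA\) kills \(Ae\), hence factors through a comparison functor \(Q\) from the quotient to \(\Perf(A/AeA)\). Since for any localizing invariant \(F\) the map \(F(A)\to F(A/AeA)\) factors as \(F(A)\to F(\text{quotient})\to F(A/AeA)\), the asserted fiber sequence holds for \(F\) if and only if \(F(Q)\) is an equivalence. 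That step is the entire content of \cref{idem}, and your proposal explicitly declines to supply it; as written you have a reduction of the theorem to its crux, not a proof. The paper closes this gap with \cref{xy7dg1}: the coreflector onto the localizing subcategory generated by \(Ae\) is claimed to be \(AeA\otimes_{A}\X\), so that the counit \(j_{!}j^{*}A\to A\) is the inclusion of the \emph{discrete} ideal \(AeA\), the endomorphism \(\E_{1}\)-ring of the generator of the quotient is \(\cofib(AeA\to A)=A/AeA\), and \(Q\) is an equivalence of categories on the nose---flatly contradicting your assertion that \(Q\) is ``in general not an equivalence.''

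On that point of disagreement, however, your caution is better founded than the paper's lemma. The proof of \cref{xy7dg1} verifies that \(\map_{A}(Ae,AeA)\to\map_{A}(Ae,A)\) is an equivalence, but it silently uses that \(AeA\) lies in the localizing subcategory generated by \(Ae\); equivalently, that the multiplication map from the derived tensor product \(Ae\otimes_{eAe}eA\) to \(AeA\) is an equivalence (the ``stratifying ideal'' condition). This genuinely fails in examples. Take \(A\) to be the four-dimensional algebra over a field \(k\) with basis \(e_{1},e_{2},x,y\), where \(e_{1},e_{2}\) are orthogonal idempotents with \(e_{1}+e_{2}=1\), \(x=e_{2}xe_{1}\), \(y=e_{1}ye_{2}\), and \(xy=yx=0\) (the radical-square-zero algebra of the two-cycle quiver), and set \(e=e_{1}\). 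Then \(eAe=k\), \(Ae=ke_{1}\oplus kx\), \(eA=ke_{1}\oplus ky\), so \(Ae\otimes_{eAe}eA=Ae\otimes_{k}eA\) is four-dimensional (and already derived, \(k\) being a field), while \(AeA=ke_{1}\oplus kx\oplus ky\) is three-dimensional: multiplication kills \(x\otimes y\). Hence \(j_{!}j^{*}A\to A\) has one-dimensional kernel, the generator of the Verdier quotient has endomorphism ring \(E\) with \(\pi_{0}E=k\), \(\pi_{1}E=k\), and \(Q\colon\Perf(E)\to\Perf(k)\) is not an equivalence. Moreover localizing invariants detect this, so the failure is not repaired by applying \(F\): topological Hochschild homology is a localizing invariant (Keller, Blumberg--Mandell), and for \(k=\QQ\) one has \(\mathrm{THH}(E)=\mathrm{HH}(E/\QQ)\) nonzero in every nonnegative degree, whereas \(\mathrm{THH}(A/AeA)=\QQ\); even \(F=K\) distinguishes them, since by Goodwillie's theorem the rational relative \(K\)-theory of \(E\to k\) is relative cyclic homology, which is nonzero already in degree one. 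So the step you flagged as ``where the real work lies'' is not merely missing from your argument---in the stated generality it cannot be carried out at all, and both your attempt and \cref{idem} itself require an additional hypothesis on \((A,e)\), e.g.\ that \(Ae\otimes_{eAe}eA\to AeA\) is bijective with \(\mathrm{Tor}^{eAe}_{n}(Ae,eA)=0\) for \(n>0\).
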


\begin{corollary}\label{xxg7qb}
  For a Banach ring~\(A\),
  consider the direct summand \(A\subset\ell^{1}(A)\)
  corresponding to the zeroth entry.
  We have a canonical isomorphism
  \begin{equation*}
    K_{-1}(A)
    \simeq
    K_{0}\biggl(
      \frac
      {\Hom_A(\ell^1(A),\ell^1(A))}
      {\Hom_A(\ell^1(A),A)\otimes_A\ell^1(A)}
    \biggr),
  \end{equation*}
  where
  \(\otimes_{A}\) denotes the \emph{algebraic} (i.e., uncompleted) tensor product.
\end{corollary}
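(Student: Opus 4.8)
The plan is to deduce the corollary by applying the localizing-invariant fiber sequence of \cref{idem} to the big Banach ring together with the idempotent cutting out the zeroth summand, and then to kill the middle term using \cref{isr}. Write \(B=\End_A(\ell^1(A))\) and let \(e\in B\) be the idempotent projecting \(\ell^1(A)\) onto its zeroth summand; concretely \(e=\iota\pi\), where \(\pi\colon\ell^1(A)\to A\) is the zeroth coordinate and \(\iota\colon A\to\ell^1(A)\) is the zeroth inclusion, so that \(\pi\iota=\id_A\) and \(\iota\pi=e\). As explained at the start of this section, \(A\) is realized as the corner \(eBe\), so \(eBe\cong A\). The point of the computation will be that the two-sided ideal \(BeB\) is exactly the displayed denominator, so that \(B/BeB\) is the ring appearing in the statement.

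First I would run the homotopy-theoretic step. Nonconnective \(K\)-theory is a localizing invariant, hence in particular an additive invariant, so \cref{idem} applied to \((B,e)\) with \(F=K\) yields a fiber sequence
\begin{equation*}
  K(eBe)\to K(B)\to K(B/BeB).
\end{equation*}
By \cref{isr} the middle term vanishes, \(K(B)\simeq0\), and by the previous paragraph \(K(eBe)\simeq K(A)\). A fiber sequence with vanishing total space forces \(K(B/BeB)\simeq\Sigma K(A)\); passing to \(\pi_0\) gives the claimed isomorphism \(K_0(B/BeB)\cong\pi_0\Sigma K(A)=K_{-1}(A)\).

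It then remains to identify the ideal \(BeB\) with the subgroup \(\Hom_A(\ell^1(A),A)\otimes_A\ell^1(A)\) of \(B\). Using \(e=\iota\pi\), a generator factors as \(bec=(b\iota)(\pi c)\), where \(\pi c\in\Hom_A(\ell^1(A),A)\) and \(b\iota\in\Hom_A(A,\ell^1(A))\cong\ell^1(A)\) via evaluation at \(1\). This exhibits every element of \(BeB\) as a finite sum of operators \(x\mapsto\Psi(x)\,m\) with \(\Psi\in\Hom_A(\ell^1(A),A)\) and \(m\in\ell^1(A)\), and conversely each such operator lies in \(BeB\). In other words, \(BeB\) is exactly the image of the natural map
\begin{equation*}
  \Theta\colon\Hom_A(\ell^1(A),A)\otimes_A\ell^1(A)\to\End_A(\ell^1(A)),
  \qquad \Psi\otimes m\mapsto\bigl(x\mapsto\Psi(x)\,m\bigr),
\end{equation*}
so that \(B/BeB\) is precisely the quotient in the statement. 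Here it is essential that the tensor product is the \emph{algebraic} one: \(\Theta\) lands in the finite-rank (not the compact) operators, which is what makes \(BeB\) a genuine two-sided ideal realizing \(B/BeB\), in line with the remark that this Calkin-like construction cannot be carried out inside \(\Cat{Ban}\).

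The main obstacle I expect is the bookkeeping in this last identification rather than the formal delooping. One must keep the left and right module structures straight — noting in particular \(\Hom_A(\ell^1(A),A)\cong\ell^{\infty}(A)\) as a right \(A\)-module via boundedness, and that \(\Theta\) is right-\(A\)-balanced — and then observe that the corollary reads the denominator as the image of \(\Theta\). The truth of the isomorphism needs only \(BeB=\operatorname{im}\Theta\), which I have sketched above; if one wishes to interpret the denominator literally as the abstract algebraic tensor product, it remains to verify that \(\Theta\) is injective, which can be approached by evaluating \(\Theta(\sum_k\Psi_k\otimes m_k)\) on the standard basis vectors \(e_j\), giving \(\sum_k\Psi_k(e_j)\,m_k\), and analyzing the resulting relations. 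This injectivity is the only genuinely delicate point, and it is inessential for the stated isomorphism.
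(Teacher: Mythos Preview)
Your proposal is correct and follows exactly the paper's approach: form the idempotent \(e\in B=\End_A(\ell^1(A))\) projecting onto the zeroth summand, apply \cref{idem} to get the fiber sequence \(K(eBe)\to K(B)\to K(B/BeB)\), and invoke \cref{isr} to kill the middle term. The paper's proof is a two-line citation of these two results, and you have simply unpacked it, including the routine identification \(BeB=\operatorname{im}\Theta\) that the paper leaves implicit; your closing caveat about the injectivity of \(\Theta\) is, as you note, inessential to the stated isomorphism.
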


\begin{proof}
  The composite
  \(\ell^{1}(A)\to A\to\ell^{1}(A)\)
  determines an element
  in \(\End_{A}(\ell^{1}(A))\),
  which is an idempotent.
  The desired result follows from \cref{isr,idem}.
\end{proof}

\begin{proof}[Proof of \cref{isr}]
We consider the full subcategory~\(\cat{A}\)
  of \(\Mod_{A}(\Cat{Ban})\)
  spanned by at most countable coproducts of~\(A\).
  By considering the internal mapping object,
  we obtain an \(\Cat{Ab}\)-enriched category~\(\cat{B}\).
Then \(\cat{B}\) is additive
  and hence we can forget the enrichment.
  We write~\(\cat{C}\)
  for the \(\infty\)-category of compact objects of
  the stabilization of
  \(\PShv_{\Sigma}(\cat{B})\).
  By definition, \(\cat{C}\) is generated by~\(\ell^{1}(A)\)
  as an idempotent-complete stable \(\infty\)-category,
  and therefore is equivalent to \(\Perf(\End_{A}(\ell^{1}(A)))\).
  Therefore,
  it suffices to show that
  \(F(\cat{C})\) vanishes
  for any additive invariant~\(F\).

  We consider the endofunctor on \(\Mod_{A}(\Cat{Ban})\)
  that maps~\(M\) to the countable coproduct of~\(M\).
  This restricts to~\(\cat{A}\)
  and induces endofunctors on~\(\cat{B}\)
  and therefore on~\(\cat{C}\),
  for which we write~\(T\).
  Since we have \({\id}\oplus T\simeq T\),
  the identity morphism of \(F(\cat{C})\) must be zero.
\end{proof}

\begin{remark}\label{x2ywou}
  To prove \cref{isr},
  we can also argue
  more concretely as follows:
  We first equip \(\End_{A}(\ell^{1}(A))\)
  with an \emph{infinite-sum ring} structure
  in the sense of~\cite{FarrellWagoner72}.
  We then observe that
  additive invariants
  carry any infinite-sum ring to zero
  (for~\(K\), this part was proven by Wagoner~\cite{Wagoner72}
  using a concrete argument).
\end{remark}

\begin{proof}[Proof of \cref{idem}]
  We write \(\cat{A}\) for \(\Perf(A)\).
  Let \(\cat{A}'\) be
  the full subcategory of~\(\cat{A}\)
  generated by \(Ae\)
  as an idempotent-complete stable \(\infty\)-category.
  Let \(\cat{A}''\) be the Verdier quotient.
  Since \(\End_{A}(Ae)\) is the corner,
  \(\cat{A}'\) is equivalent to \(\Perf(eAe)\).
  Hence it suffices to identify \(\cat{A}''\)
  with \(\Perf(A/AeA)\).
  To compute~\(\cat{A}''\),
  we \(\Ind\)-extend the situation to get the diagram
  \begin{equation*}
    \begin{tikzcd}
      \Ind(\cat{A}')\ar[r,shift left,"j_{!}"]&
      \Ind(\cat{A})\ar[r,shift left,"i^{*}"]\ar[l,shift left,"j^{*}"]&
      \Ind(\cat{A}'')\rlap.\ar[l,shift left,"i_{*}"]
    \end{tikzcd}
  \end{equation*}
  Note that \(i^{*}A\) generates~\(\cat{A}''\)
  and its endomorphism \(\E_{1}\)-ring can be computed as
  \begin{equation*}
    \map_{\cat{A}'}(i^{*}A,i^{*}A)
    \simeq
    \map_{\cat{A}}(A,i_{*}i^{*}A)
    \simeq
    \cofib(
    \map_{\cat{A}}(A,j_{!}j^{*}A)
    \to
    \map_{\cat{A}}(A,A)
    ),
  \end{equation*}
  where \(\map\) denotes the mapping spectrum.
  Therefore,
  the desired result follows from \cref{xy7dg1} below.
\end{proof}

\begin{lemma}\label{xy7dg1}
  For an idempotent~\(e\) in a ring~\(A\),
  we write
  \(\cat{C}\subset\LMod(A)\)
  for the full subcategory generated by~\(Ae\)
  under colimits and shifts.
  Then its coreflector is given as
  \(AeA\otimes_{A}\X\).
\end{lemma}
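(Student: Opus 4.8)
The plan is to check directly that the endofunctor \(\Phi=AeA\otimes_{A}\X\) computes the colocalization onto \(\cat{C}\); that is, that it is the coreflector (the right adjoint to the inclusion \(\iota\colon\cat{C}\hookrightarrow\LMod(A)\)) followed by~\(\iota\). Equivalently, I must show that the multiplication counit \(\Phi(M)\to M\) exhibits \(\Phi(M)\) as the colocalization of~\(M\), which amounts to two conditions: that \(\Phi(M)\) lies in~\(\cat{C}\), and that the cofiber of the counit lies in the right-orthogonal complement \(\cat{C}^{\perp}=\{N\mid\map_{A}(Ae,N)\simeq0\}\). Both conditions are preserved under colimits in~\(M\), and every object of \(\LMod(A)\) is a colimit of shifts of~\(A\); since \(\Phi\) preserves colimits it therefore suffices to treat \(M=A\), where the counit is the inclusion of the two-sided ideal \(AeA\hookrightarrow A\).

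The orthogonality of the cofiber is the easy half. Because \(Ae\) is a direct summand of~\(A\), hence projective, the mapping spectrum \(\map_{A}(Ae,N)\) is simply~\(eN\); applying this to \(A/AeA\) yields \(eA/eAeA\), and \(eAeA=eA\) since \(e\in eAe\), so \(A/AeA\) indeed lies in~\(\cat{C}^{\perp}\). The remaining — and genuinely harder — point is that \(AeA\) itself belongs to~\(\cat{C}\). This is the step that separates the honest two-sided ideal from the a priori larger derived object, and it is where the idempotent structure must be used.

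To prove \(AeA\in\cat{C}\) I would exhibit \(AeA\) as a colimit of copies of the generator~\(Ae\), using the two-sided bar construction \(Ae\otimes_{eAe}(eAe)^{\otimes\bullet}\otimes_{eAe}eA\): each term is a coproduct of copies of~\(Ae\) and so lies in~\(\cat{C}\), its realization maps to~\(A\) through the multiplication map \(Ae\otimes_{eAe}eA\to A\) with image~\(AeA\), and applying \(\map_{A}(Ae,\X)=e(\X)\) turns it into the bar resolution of~\(eA\) over~\(eAe\). The crux of the argument is to show that this realization carries no higher homotopy and recovers \(AeA\) on the nose — equivalently, that the multiplication map \(Ae\otimes_{eAe}eA\to AeA\) is an equivalence of \((A,A)\)-bimodules. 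Once this is in hand, \(AeA\simeq Ae\otimes_{eAe}eA\) is manifestly built from~\(Ae\) under colimits, both conditions of the first paragraph are satisfied, and the identification of the coreflector with \(AeA\otimes_{A}\X\) follows. I expect this equivalence to be the main obstacle, since it is the one point where the argument must use the specific structure of the idempotent ideal~\(AeA\) rather than the purely formal properties of the colocalization.
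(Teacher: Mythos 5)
Your skeleton is sound, and in one important respect it is \emph{more} careful than the paper's own proof: you recognize that identifying the coreflector with \(AeA\otimes_{A}\X\) requires two separate checks, namely that the cofiber of the counit lies in \(\cat{C}^{\perp}\) (which you verify exactly as the paper does, via \(e(A/AeA)=eA/eAeA=0\)), and that \(AeA\otimes_{A}M\) actually lies in \(\cat{C}\). The paper's proof addresses only the first check: invoking colimit-preservation of the coreflector, it reduces everything to the computation \(\map_{A}(Ae,AeA)\simeq\map_{A}(Ae,A)\) and never discusses membership in \(\cat{C}\). You correctly isolate the second check and reformulate it, via the bar construction, as the assertion that the multiplication map \(Ae\otimes^{L}_{eAe}eA\to AeA\) is an equivalence. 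But you then leave exactly this assertion unproved, calling it ``the main obstacle.'' That is a genuine gap, and it cannot be closed in the stated generality: the assertion is false for a general idempotent in a general ring.

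Concretely, let \(A=kQ/(ab)\), where \(Q\) is the quiver with vertices \(1,2\) and arrows \(a\colon1\to2\), \(b\colon2\to1\), and let \(e=e_{1}\). Then \(eAe=k[t]/(t^{2})\) with \(t=ba\), while \(Ae\simeq eAe\oplus k\) as right \(eAe\)-modules (generated by \(e_{1}\) and by \(a\), since \(a\cdot ba=0\)) and \(eA\simeq eAe\oplus k\) as left \(eAe\)-modules (generated by \(e_{1}\) and by \(b\)). Hence \(\operatorname{Tor}^{eAe}_{i}(Ae,eA)\) contains \(\operatorname{Tor}^{k[t]/(t^{2})}_{i}(k,k)=k\) for all \(i\geq1\), and \(Ae\otimes_{eAe}eA\) is five-dimensional while \(AeA=\operatorname{span}\{e_{1},a,b,ba\}\) is four-dimensional, the kernel being spanned by \(a\otimes b\). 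The value of the coreflector at \(A\) is \(Ae\otimes^{L}_{eAe}eA\): it lies in \(\cat{C}\) by your own bar-construction argument, and it has the required orthogonality because \(e\bigl(Ae\otimes^{L}_{eAe}eA\bigr)\simeq eAe\otimes^{L}_{eAe}eA\simeq eA\). So here the coreflector of \(A\) is not discrete and is not \(AeA\); in particular \(AeA\notin\cat{C}\). Note moreover that in this example \(\map_{A}(Ae,AeA)\to\map_{A}(Ae,A)\) \emph{is} an equivalence (both sides are \(eA\)), so the example simultaneously shows that the paper's reduction is itself incomplete: orthogonality of the cofiber alone does not pin down the colocalization. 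What is needed is precisely the classical condition that \(AeA\) be a \emph{stratifying} ideal in the sense of Cline--Parshall--Scott, i.e., that \(Ae\otimes_{eAe}eA\to AeA\) be an isomorphism with vanishing higher Tor; this is an extra hypothesis, not a formal consequence, and \cref{xy7dg1}, and with it \cref{idem} and \cref{xxg7qb}, can only be salvaged by adding it (and then verifying it for the particular idempotent of \(\End_{A}(\ell^{1}(A))\) used in the application).
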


\begin{proof}
  By the compactness of~\(Ae\),
  the coreflector preserves colimits.
  Therefore,
  it is reduced to showing that
  the map
  \begin{equation*}
    \map_{A}(Ae,AeA)
    \to
    \map_{A}(Ae,A)
  \end{equation*}
  induced by \(AeA\hookrightarrow A\) is an equivalence.
  By realizing this morphism as a direct summand of
  \(AeA\hookrightarrow A\),
  we can write it as
  \begin{equation*}
    \{aea'\in AeA\mid(1-e)aea'=0\}
    \hookrightarrow
    \{a\in A\mid(1-e)a=0\}.
  \end{equation*}
  The desired result follows from the observation
  that \((1-e)a=0\)
  implies \(a=1ea\).
\end{proof}

\section{\texorpdfstring{\(K_{-1}\)}{K-1} of Banach rings}\label{s:one}

We prove \cref{main}
by studying
how the construction in \cref{s:deloop}
behaves with respect to filtered colimits.
We then deduce \cref{hinv} from it.
We start with some elementary observations:

\begin{lemma}\label{xp2msm}
  Let \(e\) be an idempotent in a Banach ring~\(A\).
  The corner \(eAe\subset A\)
  is a Banach ring with the induced norm
  if and only if \(\lVert e\rVert\leq1\).
\end{lemma}

\begin{proof}
By \(e^{2}=e\),
  any idempotent satisfies 
  \(\lVert e\rVert=0\)
  or \(\lVert e\rVert\geq1\).

  When \(\lVert e\rVert=0\),
  the claim is trivial.
  When \(\lVert e\rVert=1\),
  we can directly check that
  \(eAe\) is a Banach ring.
  When \(\lVert e\rVert>1\),
  it cannot be a Banach ring
  since \(\lVert1\rVert\leq1\) is required.
\end{proof}

\begin{lemma}\label{x82cbs}
  Let \(A=\injlim_{i}A_{i}\)
  be a filtered colimit of Banach rings.
  Let \(e_{i}\in A_{i}\)
  be a compatible family of idempotents
  satisfying \(\lVert e_{i}\rVert\leq1\),
  determining an idempotent~\(e\in A\).
  \begin{enumerate}
    \item\label{i:eae}
      The map
      \(\injlim_{i}e_{i}A_{i}e_{i}
      \to
      eAe\)
      where the colimit is taken in \(\Alg(\Cat{Ban})\)
      is an isomorphism.
    \item\label{i:aea}
      The union of the image of
      \(A_{i}e_{i}A_{i}\)
      and \(AeA\)
      have the same closure in~\(A\).
  \end{enumerate}
\end{lemma}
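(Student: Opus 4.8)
The plan is to reduce both parts to the explicit construction of filtered colimits in \(\Alg(\Cat{Ban})\) used in \cref{main}: the forgetful functor to \(\Cat{Ban}\) creates filtered colimits, and in \(\Cat{Ban}\) these are the completion of the algebraic filtered colimit equipped with the colimit seminorm \(\lVert[x_i]\rVert=\inf_{j}\lVert g_{ij}(x_i)\rVert\). Write \(g_i\colon A_i\to A\) for the tautological maps and \(g_{ij}\colon A_i\to A_j\) for the (short) transition maps, so that compatibility means \(g_{ij}(e_i)=e_j\). First note \(\lVert e\rVert=\inf_j\lVert g_{ij}(e_i)\rVert\leq\lVert e_i\rVert\leq1\), so by \cref{xp2msm} each corner \(e_iA_ie_i\) and the corner \(eAe\) are Banach rings with the induced norms; this is what makes the statement well posed.

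For \cref{i:eae} I would first observe that taking corners commutes with filtered colimits at the algebraic level: since filtered colimits of abelian groups are exact and \(e_iA_ie_i\) is the image of the idempotent endomorphism \(a\mapsto e_iae_i\) of \(A_i\), the algebraic colimit \(\injlim_i^{\mathrm{alg}}e_iA_ie_i\) equals \(e(\injlim_i^{\mathrm{alg}}A_i)e\). The crux is then to compare two norms on this common algebraic object: the colimit seminorm from \(\injlim_ie_iA_ie_i\) and the subspace norm inherited from \(A\). Because the corner norm on each \(e_jA_je_j\) is by definition the norm induced from \(A_j\) (\cref{xp2msm}), the value of the first seminorm on a class represented by \(\xi_i=e_ia_ie_i\) is \(\inf_{j}\lVert g_{ij}(\xi_i)\rVert_{A_j}\), which is literally the colimit seminorm of \(\xi_i\) viewed in \(\injlim_i^{\mathrm{alg}}A_i\), i.e.\ the subspace norm from \(A\). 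Hence the tautological map is an isometry on \(e(\injlim_i^{\mathrm{alg}}A_i)e\). To finish I would note that \(a\mapsto eae\) is a contraction whose image is the closed subgroup \(eAe\), so it carries the dense subgroup \(\injlim_i^{\mathrm{alg}}A_i\subset A\) onto a dense subgroup of \(eAe\); thus \(e(\injlim_i^{\mathrm{alg}}A_i)e\) is dense in \(eAe\), and the isometric extension to the completion \(\injlim_ie_iA_ie_i\) is a bijection onto \(eAe\), hence an isomorphism in \(\Cat{Ban}\) (and of Banach rings).

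For \cref{i:aea} the argument is softer and uses only density and continuity of multiplication. One inclusion is immediate: \(g_i(a_ie_ib_i)=g_i(a_i)\,e\,g_i(b_i)\in AeA\), so \(\bigcup_ig_i(A_ie_iA_i)\subseteq AeA\) and one containment of closures holds. For the reverse, I would take a typical element \(\sum_k\alpha_ke\beta_k\in AeA\) and approximate each \(\alpha_k,\beta_k\) by elements of the dense subgroup \(\injlim_i^{\mathrm{alg}}A_i\); choosing a common stage \(i\) in the filtered system these approximants are of the form \(g_i(a_{i,k}),g_i(b_{i,k})\), and by continuity of addition and multiplication \(\sum_kg_i(a_{i,k})\,e\,g_i(b_{i,k})=g_i\!\bigl(\sum_ka_{i,k}e_ib_{i,k}\bigr)\in g_i(A_ie_iA_i)\) is as close to \(\sum_k\alpha_ke\beta_k\) as we wish. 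Hence \(AeA\subseteq\overline{\bigcup_ig_i(A_ie_iA_i)}\), giving equality of closures.

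The only genuinely delicate point is the norm identification in \cref{i:eae}: one must check that forming the corner commutes with the completion hidden in the filtered colimit, and this works precisely because the corner carries the induced norm rather than some a priori larger norm, so the two seminorms agree on the nose. Everything else is density and contractivity.
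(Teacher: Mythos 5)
Your proof is correct and is essentially the same as the paper's: both rest on \cref{xp2msm} to see that the corner norms are the induced norms, so that the Banach colimit \(\injlim_{i}e_{i}A_{i}e_{i}\) is identified with the closure in \(A\) of the union of the images of the \(e_{i}A_{i}e_{i}\), and then conclude both parts by density and continuity (approximating \(eae\) by \(ea_{n}e\) and \(aeb\) by \(a_{n}eb_{n}\)). Your version merely spells out the colimit seminorm, the closedness of \(eAe\), and the finite-sum case in~(ii) more explicitly than the paper does.
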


\begin{proof}
We first prove~\cref{i:eae}.
  By \cref{xp2msm},
  we know that the colimit
  is the closure of the union
  of the image of \(e_{i}A_{i}e_{i}\),
  which we wish to identify with~\(eAe\).
  For \(a\in A\),
  we take a sequence \((a_{n})_{n}\) in the image of the union
  of the images of \(A_{i}\) converging to~\(a\).
  Then \((ea_{n}e)_{n}\) is a sequence
  in the image of the union of the images of \(e_{i}A_{i}e_{i}\)
  converging to~\(eae\).

  We then prove~\cref{i:aea}.
  For \(a\) and \(b\in A\),
  we approximate them similarly by \((a_{n})_{n}\) and~\((b_{n})_{n}\)
  which are in the union of the images of~\(A_{i}\).
  Then \((a_{n}eb_{n})_{n}\) converges to~\(aeb\).
\end{proof}

\begin{proof}[Proof of \cref{main}]
  We consider \(B_{i}=\End_{A_{i}}(\ell^{1}(A_{i}))\)
  and write~\(B\) for its colimit in \(\Alg(\Cat{Ban})\).
  Let \(e_{i}\in B_{i}\) be the idempotent
  corresponding to the zeroth entry.
  Then this family is compatible
  and determines an idempotent \(e\in B\).
  By~\cref{i:eae} of \cref{x82cbs},
  \(eBe\) is isomorphic to~\(A\).
  Therefore,
  by \cref{idem},
  we obtain the diagram
  \begin{equation*}
    \begin{tikzcd}
      \injlim_{i}K_{0}(B_{i})\ar[r]\ar[d]&
      \injlim_{i}K_{0}(B_{i}/B_{i}e_{i}B_{i})\ar[r]\ar[d]&
      \injlim_{i}K_{-1}(A_{i})\ar[r]\ar[d]&
      \injlim_{i}K_{-1}(B_{i})\ar[d]\\
      K_{0}(B)\ar[r]&
      K_{0}(B/BeB)\ar[r]&
      K_{-1}(A)\ar[r]&
      K_{-1}(B)
    \end{tikzcd}
  \end{equation*}
  with exact rows.
  Since \(K(B_{i})\) and~\(K(B)\) vanish
  by \cref{isr},
  the third vertical arrow is isomorphic to the second,
  which is injective by~\cref{i:aea} of \cref{x82cbs}
  and \cref{xlcvuz}.
\end{proof}

We then deduce the homotopy invariance property of \(K_{-1}\):

\begin{proof}[Proof of \cref{hinv}]
  This argument follows the proof of~\cite[Lemma~7.28]{k-ros-2},
  but note that here we do not use full excision
  and hence we do not require the Dugundji extension theorem.

  For real numbers \(a\leq b\),
  we write \(M(a,b)\)
  for the cokernel of
  the split injection
  \(K_{-1}(A)\to K_{-1}(\Cls{C}([a,b];A))\).
  We have to show that \(M(0,1)\) vanishes.
  We assume that
  there is a nonzero class \(\alpha\in M(0,1)\)
  and wish to derive a contradiction.

  We first see that
  \(M(a,c)\to M(a,b)\oplus M(b,c)\) is an isomorphism
  for \(a\leq b\leq c\).
  We consider the diagram
  \begin{equation*}
    \begin{tikzcd}
      \Cls{C}([a,c];A)\ar[r]\ar[d]&
      \Cls{C}([a,b];A)\ar[d]\\
      \Cls{C}([b,c];A)\ar[r]&
      \Cls{C}(\{b\};A)\rlap,
    \end{tikzcd}
  \end{equation*}
  which consists of split surjections of rings.
  The desired isomorphism
  follows from excision for negative \(K\)-theory.

  From this,
  we see that \(\alpha\)
  is nonzero in either \(M(0,1/2)\) or \(M(1/2,1)\).
  Repeating this process,
  we can pick \(p\in[0,1]\)
  such that
  \(\alpha\) is nonzero in
  \begin{equation*}
    \injlim_{p\in [a,b]\subset[0,1]}
    M(a,b),
  \end{equation*}
  where \([a,b]\) runs over the neighborhoods of \(p\in[0,1]\).
  However, this contradicts \cref{main},
  which implies that this colimit vanishes.
\end{proof}

\begin{remark}\label{x18a95}
  By using \cref{x90xl4} instead of \cref{main}
  in the proof of \cref{hinv} above,
  we obtain the homotopy invariance property
  of~\(K_{0}\).
\end{remark}

\section{Recollection: rings of holomorphic functions}\label{s:hol}

From now on,
all rings are assumed to be commutative
and all Banach algebras are always complex.

This section is a preparation for the proof of \cref{ce}
in the next section.
We require
complex-analytic spaces to be reduced and separated.
Recall that a \emph{Stein compact set}
is a locally ringed space over~\(\Spec\CC\) that
can be realized
as a holomorphically convex closed subspace of some Stein space.

\begin{definition}\label{xf8gdf}
  For a Stein compact set~\(Z\),
  we write \(\Cls{O}(Z)\)
  for the ring of global sections.
  We write \(\Cls{A}(Z)\) for its completion
  with respect to the supremum norm.
\end{definition}

\begin{example}\label{xf6r1i}
  In \cref{xf8gdf},
  consider the case where
  \(Z\) is the Stein compact set
  corresponding to the unit disk inside~\(\CC\).
  Then \(\Cls{O}(Z)\) is the ring
  of overconvergent functions
  and \(\Cls{A}(Z)\) is the disk algebra.
\end{example}

\begin{example}\label{stein}
  In \cref{xf8gdf},
  Let \(Y\) be a real-analytic space
  and \(Z\) a compact subset of its real points.
  Then the inclusion \(Z\subset Y_{\CC}\)
  gives~\(Z\)
  the structure of a Stein compact set.
  In this case, \(\Cls{O}(Z)\)
  is the ring of overconvergent real-analytic functions
  and therefore \(\Cls{A}(Z)\) coincides with
  the ring of continuous functions~\(\Cls{C}(Z)\).
\end{example}

\begin{example}\label{xah68l}
Consider Stein compact sets~\(Z\) and~\(Z'\)
  satisfying \(Z'\Subset Z\),
  i.e., there is a Stein space~\(U\)
  such that \(Z'\subset U\subset Z\).
  Then we can fill the diagram
  \begin{equation*}
    \begin{tikzcd}
      \Cls{O}(Z)\ar[r]\ar[d]&
      \Cls{O}(Z')\ar[d]\\
      \Cls{A}(Z)\ar[r]\ar[ur,dashed]&
      \Cls{A}(Z')
    \end{tikzcd}
  \end{equation*}
  by a dashed arrow
  since
  the restriction
  \(\Cls{O}(Z)\to\Cls{O}(U)\)
  factors through the completion.
\end{example}

The following is a key tool for our proof of \cref{ce}:

\begin{example}\label{ind_ban}
  Suppose that a Stein compact set~\(Z\)
  is a holomorphically convex closed subspace
  of a Stein space~\(X\).
  We take its Stein compact neighborhood
  \begin{equation*}
    Z_{0}\Supset Z_{1}\Supset\dotsb
  \end{equation*}
  which converges to~\(Z\).
  By definition,
  \(\Cls{O}(Z)\) is the colimit of \(\Cls{O}(Z_{n})\).
  By \cref{xah68l},
  we see that it is also the colimit of \(\Cls{A}(Z_{n})\).
  This also shows that the colimit of \(\Cls{A}(Z_{n})\)
  in \(\Alg(\Cat{Ban})\)
  (or equivalently, \(\CAlg_{\CC}(\Cat{Ban})\); cf.~\cref{real_ban})
  is \(\Cls{A}(Z)\).
\end{example}

Finally,
we recall a classical fact:

\begin{theorem}\label{o_reg}
  Suppose that \(Z\) is a Stein compact subset of
  a complex manifold.
  When \(Z\) is semianalytic,
  \(\Cls{O}(Z)\) is a regular ring.
\end{theorem}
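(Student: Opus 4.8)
The plan is to deduce regularity from two classical analytic inputs together with a faithfully-flat descent argument. Write \(M\) for the ambient complex manifold and \(R=\Cls{O}(Z)=\injlim_{U\supseteq Z}\Cls{O}(U)\), the colimit over open (Stein) neighborhoods \(U\) of \(Z\). Since regularity of a Noetherian ring may be checked on localizations at maximal ideals, and a localization of a regular local ring is again regular, it suffices to establish three things: (i) \(R\) is Noetherian; (ii) every maximal ideal of \(R\) has the form \(\idl{m}_{x}=\{f\in R:f(x)=0\}\) for some \(x\in Z\); and (iii) each localization \(R_{\idl{m}_{x}}\) is a regular local ring. Input (i) is precisely Frisch's theorem that \(\Cls{O}(K)\) is Noetherian for a compact semianalytic set \(K\) in a complex manifold; this is the one place where the semianalyticity hypothesis is essential.

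For (ii), I would argue that a proper ideal \(I\subsetneq R\) has a common zero on \(Z\). If not, then for finitely many generators \(f_{1},\ldots,f_{k}\) of \(I\) (available by (i)) the common vanishing locus \(W=\{f_{1}=\cdots=f_{k}=0\}\) is closed in a neighborhood of \(Z\) and disjoint from the compact set \(Z\); choosing a Stein neighborhood \(V\) with \(Z\subseteq V\) and \(V\cap W=\emptyset\), Cartan's Theorem~B yields \(g_{i}\in\Cls{O}(V)\) with \(\sum_{i}g_{i}f_{i}=1\), so that \(1\in I\), a contradiction. Hence \(I\subseteq\idl{m}_{x}\) for some \(x\in Z\), and applying this to a maximal ideal gives (ii).

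For (iii), I would use the natural germ homomorphism \(R\to\Cls{O}_{M,x}\), where \(\Cls{O}_{M,x}\cong\CC\{z_{1},\ldots,z_{n}\}\) is the convergent power series ring, hence a regular local ring. By Frisch's flatness theorem this map is flat; since every element of \(R\setminus\idl{m}_{x}\) becomes a unit in \(\Cls{O}_{M,x}\), it factors through a flat \emph{local} homomorphism \(R_{\idl{m}_{x}}\to\Cls{O}_{M,x}\) of Noetherian local rings, which is therefore faithfully flat. Base-changing a minimal free resolution of the residue field \(k=R_{\idl{m}_{x}}/\idl{m}_{x}R_{\idl{m}_{x}}\) along this local map keeps it minimal, so the projective dimension of \(k\) over \(R_{\idl{m}_{x}}\) is bounded by the global dimension of the regular ring \(\Cls{O}_{M,x}\) and is in particular finite; by Serre's criterion \(R_{\idl{m}_{x}}\) is regular. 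This establishes (iii) and completes the argument.

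The substantive content, and the main obstacle, lies entirely in the two cited analytic theorems of Frisch---Noetherianity of \(\Cls{O}(Z)\) and flatness of the germ maps \(R\to\Cls{O}_{M,x}\)---both of which genuinely use that \(Z\) is compact and semianalytic; the remaining steps (the Cartan argument and the faithfully-flat descent of regularity) are formal. An alternative to the descent in (iii) is to cite the stronger form of Frisch's results identifying \(R_{\idl{m}_{x}}\) with \(\Cls{O}_{M,x}\) outright, after which regularity is immediate.
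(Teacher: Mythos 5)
Your main argument is correct, but it justifies two of its three steps differently from the paper, so a comparison is worthwhile. The paper's proof has the same skeleton: Frisch for Noetherianity, then the statement that every maximal ideal of \(\Cls{O}(Z)\) is the ideal of a point of \(Z\), then regularity of each local ring. For the second step the paper simply cites Zame (Corollary~3.3), whereas you give a self-contained Nullstellensatz-type argument via Cartan's Theorem~B; this works because a holomorphically convex compact subset of a Stein space admits a fundamental system of open Stein neighborhoods, so your Stein \(V\) with \(Z\subset V\) and \(V\cap W=\emptyset\) exists. For the third step the paper passes to the \(\idl{m}_{x}\)-adic completion and identifies it with the formal power series ring, whereas you descend regularity along the germ map \(R_{\idl{m}_{x}}\to\Cls{O}_{M,x}\cong\CC\{z_{1},\ldots,z_{n}\}\). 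Your flatness input is indeed available: for a Stein open \(U\) the map \(\Cls{O}(U)\to\Cls{O}_{M,x}\) is flat by the equational criterion together with Cartan's Theorem~A applied to sheaves of relations, and flatness passes to the filtered colimit \(R=\injlim_{U\supset Z}\Cls{O}(U)\); note in particular that this step needs neither compactness nor semianalyticity, contrary to your closing remark --- semianalyticity enters only through Noetherianity. Structurally the two proofs are parallel (both descend regularity along a faithfully flat local map into a visibly regular ring), but yours descends from the analytic local ring while the paper descends from the formal completion; yours is more self-contained, the paper's is shorter.

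One genuine error, though it lies outside your main argument: the ``alternative'' offered in your final sentence is false. No form of Frisch's results identifies \(R_{\idl{m}_{x}}\) with \(\Cls{O}_{M,x}\). Already for \(Z\) the closed unit disk in \(\CC\) and \(x=0\), every element of \(R_{\idl{m}_{0}}\) is a quotient \(g/h\) with \(g\), \(h\) holomorphic on a neighborhood of \(Z\) and \(h(0)\neq0\), hence extends meromorphically to such a neighborhood; a convergent power series with natural boundary on a small circle, such as \(\sum_{n}(10z)^{n!}\), therefore lies in \(\Cls{O}_{\CC,0}\) but not in \(R_{\idl{m}_{0}}\). What is true is exactly what you prove (the map \(R_{\idl{m}_{x}}\to\Cls{O}_{M,x}\) is faithfully flat) and what the paper uses (the two rings have the same formal completion); the honest identification happens only after completing, so the descent step cannot be short-circuited.
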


\begin{proof}
  Frisch's result~\cite{Frisch67}
  states that \(\Cls{O}(Z)\) is noetherian in that case.
  To check that it is regular,
  it suffices to consider the formal completion
  at each maximal ideal.
  By~\cite[Corollary~3.3]{Zame76},
  any maximal ideal corresponds to a point of~\(Z\).
  In that case,
  the completion is just the ring
  of formal power series,
  which is regular.
\end{proof}

\section{Lower \texorpdfstring{\(K\)}{K}-groups of commutative complex Banach algebras}\label{s:ce}

To prove \cref{ce},
we construct the following counterexamples separately:

\begin{theorem}\label{ce_sur}
  For each \({*}\leq-1\),
  there is a sequence of commutative complex Banach algebras
  \((A_{n})_{n}\) with colimit~\(A\)
  such that
  the map
  \(\injlim_{n}K_{*}(A_{n})\to K_{*}(A)\)
  is not surjective.
\end{theorem}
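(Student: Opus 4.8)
The plan is to exploit the coincidence recorded in \cref{ind_ban}: for a suitable tower of Stein compact neighborhoods, the \emph{algebraic} colimit of the Banach algebras \(\Cls{A}(Z_{n})\) is the overconvergent ring \(\Cls{O}(Z)\), whereas their colimit in \(\Alg(\Cat{Ban})\) is the completion \(\Cls{A}(Z)\). Since algebraic \(K\)-theory commutes with filtered colimits of rings, this gives \(\injlim_{n}K_{*}(\Cls{A}(Z_{n}))\cong K_{*}(\Cls{O}(Z))\), while the target \(K_{*}(\Cls{A}(Z))\) is the \(K\)-theory of a genuine Banach algebra, and the map in question is the one induced by the completion \(\Cls{O}(Z)\to\Cls{A}(Z)\). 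The two sides can then be pried apart by choosing \(Z\) so that \(\Cls{O}(Z)\) is regular, which kills its negative \(K\)-theory, while \(\Cls{A}(Z)\) retains rich topological \(K\)-theory.

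Concretely, I would take \(Z=S^{1}\), viewed through its real-analytic structure as a semianalytic Stein compact subset of its complexification \(Y_{\CC}\), a one-dimensional Stein manifold, as in \cref{stein}. Choose shrinking Stein compact neighborhoods \(Z_{0}\Supset Z_{1}\Supset\dotsb\) of \(Z\) inside \(Y_{\CC}\) with intersection \(Z\), and set \(A_{n}=\Cls{A}(Z_{n})\); these are commutative complex Banach algebras. By \cref{ind_ban,stein}, the colimit in \(\Alg(\Cat{Ban})\) is \(A=\Cls{A}(Z)=\Cls{C}(S^{1})\), while the algebraic colimit of the \(A_{n}\) is \(\Cls{O}(Z)=\Cls{O}(S^{1})\). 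Consequently \(\injlim_{n}K_{*}(A_{n})\cong K_{*}(\Cls{O}(S^{1}))\), and the tautological map is the completion map on \(K\)-theory.

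The next step is to compute both sides. On the source, \cref{o_reg} shows that \(\Cls{O}(S^{1})\) is a regular noetherian ring, so its negative \(K\)-theory vanishes and \(\injlim_{n}K_{*}(A_{n})=0\) for every \({*}\leq-1\). On the target, the results of Cortiñas--Thom identify \(K_{*}(\Cls{C}(S^{1}))\) with the topological \(K\)-theory \(K^{\top}_{*}(\Cls{C}(S^{1}))\) in degrees \({*}\leq0\); the latter equals \(K^{0}(S^{1})=\ZZ\) in even degrees and \(K^{1}(S^{1})=\ZZ\) in odd degrees by Bott periodicity, hence is nonzero for every \({*}\leq-1\). Therefore the map \(0=\injlim_{n}K_{*}(A_{n})\to K_{*}(A)\neq0\) fails to be surjective in every degree \({*}\leq-1\), so a single sequence settles all \({*}\) at once; higher-dimensional spheres could be substituted to target individual degrees, but this is unnecessary.

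The main obstacle lies not in the formal assembly but in securing the two computational inputs. The vanishing on the source rests on the regularity of \(\Cls{O}(S^{1})\), which is exactly \cref{o_reg} (via Frisch's noetherianity and completion at maximal ideals), together with the classical vanishing of negative \(K\)-theory for regular noetherian rings. The nonvanishing of the target is the substantive external ingredient, requiring the Cortiñas--Thom homotopy-invariance theorem to identify \(K_{*}(\Cls{C}(S^{1}))\) with topological \(K\)-theory. The conceptual crux, however, is recognizing that \emph{one} tower of Banach algebras carries two distinct colimits---a regular ring algebraically and a C*-algebra analytically---so that the completion map is precisely the arrow whose failure of surjectivity on \(K_{*}\) we wish to exhibit.
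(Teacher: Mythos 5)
Your proposal is correct and follows the paper's own skeleton: the same reduction via \cref{ind_ban} to non-surjectivity of \(K_{-d}(\Cls{O}(Z))\to K_{-d}(\Cls{A}(Z))\), the same realization of a sphere as a Stein compact set via \cref{stein}, and the same appeal to \cref{o_reg} (plus vanishing of negative \(K\)-theory of regular noetherian rings) to kill the source. The one genuine divergence is the target computation: the paper takes \(Z=S^{d}\) for the single degree \({*}=-d\) under consideration and cites Friedlander--Walker \cite{FriedlanderWalker01C} for the nonvanishing of \(K_{-d}(\Cls{C}(S^{d}))\), whereas you fix \(Z=S^{1}\) once and for all and cite Corti\~nas--Thom \cite{CortinasThom12} to identify \(K_{*}(\Cls{C}(S^{1}))\) with \(K^{\top}_{*}(\Cls{C}(S^{1}))\cong\ZZ\) in every degree \({*}\leq0\). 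Both external inputs are legitimate, and your variant yields something marginally stronger: a \emph{single} sequence witnessing non-surjectivity in all degrees \({*}\leq-1\) simultaneously, which would let the proof of \cref{ce} dispense with the product over \({*}\leq-1\) of the surjectivity counterexamples. The trade-off is that you invoke the full Corti\~nas--Thom comparison theorem (the commutative case of \cref{rosenberg}, i.e., the very circle of ideas this paper extends), while the paper gets by with one isolated computation per degree; conceptually the paper's choice also keeps the surjectivity and injectivity counterexamples (\cref{ce_sur,ce_inj}) parallel, with each degree handled by a space of matching dimension.
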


\begin{theorem}\label{ce_inj}
  For each \({*}\leq-2\),
  there is a sequence of commutative complex Banach algebras
  \((A_{n})_{n}\) with colimit~\(A\)
  such that
  the map
  \(\injlim_{n}K_{*}(A_{n})\to K_{*}(A)\)
  is not injective.
\end{theorem}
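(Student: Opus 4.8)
The plan is to realize the sequence as completed rings of functions on a shrinking family of Stein compact sets and to exploit the asymmetry between holomorphic function algebras and C*-algebras. Concretely, I would fix a Stein compact set~\(Z\) together with a decreasing sequence of Stein compact neighborhoods \(Z_{0}\Supset Z_{1}\Supset\dotsb\) converging to it, and set \(A_{n}=\Cls{A}(Z_{n})\). By \cref{ind_ban}, the colimit of the~\(A_{n}\) in commutative complex Banach algebras is exactly \(A=\Cls{A}(Z)\), so the map in question is the continuity map \(\injlim_{n}K_{*}(\Cls{A}(Z_{n}))\to K_{*}(\Cls{A}(Z))\). The key choice is to take~\(Z\) to be the compact real locus of a real-analytic space (\cref{stein}), so that the limit \(A=\Cls{A}(Z)=\Cls{C}(Z)\) is a commutative complex C*-algebra, whereas every finite stage \(A_{n}=\Cls{A}(Z_{n})\) is a genuinely holomorphic function algebra on a complex Stein neighborhood and is \emph{not} a C*-algebra.

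The limit side is then controlled by the commutative case of \cref{rosenberg}, proven by Cortiñas--Thom: since~\(A\) is a commutative complex C*-algebra, \(K_{*}(A)\) is homotopy invariant for \({*}\leq0\) and hence depends only on the homotopy type of~\(Z\). Choosing~\(Z\) of small dimension---a real \((m-1)\)-manifold when the target degree is \({*}=-m\)---forces the target group \(K_{-m}(\Cls{C}(Z))\) to vanish, since the homotopy-invariant value is assembled from the groups \(K_{-m+j}(\CC)\) with \(0\leq j\leq\dim Z\), all of which vanish once \(\dim Z<m\). No such rigidity is available at the finite stages, and this is precisely the leverage: the holomorphic algebras \(\Cls{A}(Z_{n})\) retain no homotopy invariance.

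It therefore remains to produce a \emph{nonzero} class in \(\injlim_{n}K_{-m}(\Cls{A}(Z_{n}))\); by the previous paragraph any such class automatically maps to~\(0\) in \(K_{-m}(\Cls{C}(Z))\), witnessing the failure of injectivity. Here I would use \cref{o_reg} to record that the overconvergent rings \(\Cls{O}(Z_{n})\) are regular, so that their negative \(K\)-theory vanishes and all negative \(K\)-theory of \(\Cls{A}(Z_{n})\) is analytic in origin, living in the Banach completion. I would then manufacture the required class by a Mayer--Vietoris (excision) boundary map for a suitable holomorphic two-piece cover of~\(Z_{n}\), whose overlaps carry the lower negative \(K\)-classes produced by the same mechanism as in \cref{ce_sur}; the boundary map pushes these one degree down into \(K_{-m}(\Cls{A}(Z_{n}))\). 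For the full range \({*}\leq-2\) one bootstraps by raising the complex dimension of the~\(Z_{n}\), equivalently by iterating this boundary construction to shift the surviving class one degree lower at each step. The hard part will be exactly this finite-stage production and detection: with regularity killing the overconvergent contribution and no homotopy invariance to exploit, the entire class must be built analytically and then shown to persist through the completed filtered colimit of \cref{ind_ban} rather than being washed out.
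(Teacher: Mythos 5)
Your reduction via \cref{ind_ban} is the same as the paper's: with \(A_{n}=\Cls{A}(Z_{n})\), the source of the map is
\(\injlim_{n}K_{-m}(\Cls{A}(Z_{n}))\cong K_{-m}(\Cls{O}(Z))\),
since the \emph{algebraic} colimit of the \(\Cls{A}(Z_{n})\) is \(\Cls{O}(Z)\) and nonconnective \(K\)-theory commutes with filtered colimits of rings. But this identification is fatal to your choice of \(Z\): if \(Z\) is a compact real-analytic manifold as in \cref{stein}, then \(Z\) is semianalytic, so \cref{o_reg} makes \(\Cls{O}(Z)\) regular and hence \(K_{-m}(\Cls{O}(Z))=0\) for all \(m\geq1\). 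You invoke \cref{o_reg} only for the individual rings \(\Cls{O}(Z_{n})\), but it applies equally to the limit ring \(\Cls{O}(Z)\), which is exactly the group you need to be nonzero. Consequently there is no class to detect: your map is \(0\to K_{-m}(\Cls{C}(Z))\), which is injective. Classes "manufactured by a Mayer--Vietoris boundary" in the individual groups \(K_{-m}(\Cls{A}(Z_{n}))\) may well exist, but none can persist into the colimit, precisely because the colimit is \(K_{-m}(\Cls{O}(Z))=0\); every such class dies after restricting to a small enough \(Z_{n'}\). So the "hard part" you flag at the end is not merely hard --- it is impossible for this \(Z\). (Your homotopy-invariance argument that the target vanishes is plausible, but moot.)

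The missing idea is that non-regularity of \(\Cls{O}(Z)\) is forced on you, so \(Z\) must sit at a genuine complex-analytic singularity; a smooth real locus is self-defeating. The paper takes \((A,\idl{m})\) to be a Reid singularity (\cref{reid}): a normal \(d\)-dimensional affine \(\CC\)-algebra with a single singular point and \(K_{-d}(A)\neq0\), and lets \(Z\) be the one-point Stein compact set at \(\idl{m}\) inside the analytification of \(\Spec A\). Then the target is trivial for trivial reasons, \(\Cls{A}(Z)\simeq\CC\), while \(\Cls{O}(Z)\) is the ring of germs of holomorphic functions at the singular point, and its \(K_{-d}\) is nonzero because the composite
\(K_{-d}(A)\to K_{-d}(\Cls{O}(Z))\to K_{-d}(A_{\idl{m}}^{\wedge})\)
is an isomorphism by Weibel's analytic isomorphism theorem~\cite[Proposition~1.6]{Weibel80}. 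Detection of the nonzero class in the colimit is thus done by completing at the singular point, not by a Mayer--Vietoris boundary, and no homotopy invariance of the target is needed.
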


We can combine them
to obtain the desired counterexample:

\begin{proof}[Proof of \cref{ce}]
  We use the counterexamples from \cref{ce_sur,ce_inj},
  denoting them as~\(B_{n}^{(*)}\) and~\(C_{n}^{(*)}\), respectively.
  Then we consider
  \begin{equation*}
    A_{n}
    =\prod_{{*}\leq-1}B_{n}^{(*)}
    \times\prod_{{*}\leq-2}C_{n}^{(*)},
  \end{equation*}
  where the products are taken in \(\Alg(\Cat{Ban})\)
  (or equivalently, in \(\CAlg_{\CC}(\Cat{Ban})\); cf.~\cref{real_ban}).
  We write \(A\) for the (completed) colimit.
  This is the desired counterexample
  since \((B_{n}^{(*)})_{n}\)
  and \((C_{n}^{(*)})_{n}\)
  are retracts of \((A_{n})_{n}\)
  as sequences of \emph{nonunital} rings.
\end{proof}

We first construct surjectivity counterexamples:

\begin{proof}[Proof of \cref{ce_sur}]
  We fix \(d\geq1\)
  and construct a counterexample for \({*}=-d\).
  By \cref{ind_ban},
  it suffices to find a Stein compact set~\(Z\)
  such that \(K_{-d}(\Cls{O}(Z))\to K_{-d}(\Cls{A}(Z))\)
  is not surjective.

  We use~\cref{stein}.
  We consider~\(Y\)
  to be the subvariety of \(\RR^{d+1}\)
  cut out by the equation \(x_{1}^{2}+\dotsb+x_{d+1}^{2}=1\)
  and take \(Z\) to be all the real points of~\(Y\).
  In this case,
  \(\Cls{O}(Z)\) is regular by \cref{o_reg}
  and thus its \(K_{-d}\) vanishes.
  However,
  \(K_{-d}\) of
  \(\Cls{A}(Z)=\Cls{C}(S^{d})\)
  is isomorphic to~\(\ZZ\)
  by
  Friedlander--Walker's result~\cite[Theorem~5.1]{FriedlanderWalker01C}.
\end{proof}

The main ingredient for injectivity counterexamples
is the following from~\cite{Reid87}:

\begin{theorem}[Reid]\label{reid}
  For a field~\(k\) and \(d\geq2\),
  there is a normal \(d\)-dimensional affine \(k\)-algebra
  with a single singular point
  whose \(K_{-d}\) does not vanish.
\end{theorem}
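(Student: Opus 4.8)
The plan is to translate the non-vanishing of \(K_{-d}\) into the non-vanishing of the top cdh-cohomology of the exceptional fibre of a resolution, and then to realise such a fibre geometrically. Concretely, I would produce a normal \(d\)-dimensional affine \(k\)-algebra \(R\) with a unique singular point \(v\in X=\Spec R\) whose resolution has an exceptional fibre that is, combinatorially, a \((d-1)\)-torus; the point is that in the bottom negative degree \(-d=-\dim X\) algebraic \(K\)-theory is homotopy-invariant, and homotopy \(K\)-theory \(KH\) satisfies cdh descent, so this bottom group is computed by the exceptional fibre.

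First I would fix a proper birational morphism \(\pi\colon\tilde X\to X\), an isomorphism over \(X\setminus\{v\}\), with \(\tilde X\) regular, and set \(E=\pi^{-1}(v)_{\mathrm{red}}\), a proper \(k\)-scheme of dimension \(d-1\). The resulting square on \(E,\tilde X,\{v\},X\) is cdh-distinguished, so \(KH\) descent (Cisinski, after Haesemeyer) yields the Mayer--Vietoris sequence
\[ \cdots\to KH_{n}(X)\to KH_{n}(\tilde X)\oplus KH_{n}(\{v\})\to KH_{n}(E)\to KH_{n-1}(X)\to\cdots. \]
Since \(\tilde X\) and \(\{v\}=\Spec k\) are regular and \(d\geq2\), their negative \(K\)-groups, hence their \(KH\)-groups, vanish in degrees \(-d\) and \(-d+1\); the sequence therefore degenerates to an isomorphism \(KH_{-d}(X)\cong KH_{-(d-1)}(E)\). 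By the resolution of Weibel's conjecture (Kerz--Strunk--Tamme), \(-d\) is the bottom nonvanishing degree and \(K_{-d}(X)\cong KH_{-d}(X)\); and in this bottom degree \(KH_{-(d-1)}(E)\cong H^{d-1}_{\cdh}(E,\ZZ)\). Thus it suffices to construct \((X,v)\) with \(H^{d-1}_{\cdh}(E,\ZZ)\neq0\).

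Here the geometry enters. A single smooth component, or a cone over a smooth projective variety, gives a contractible (disk-like) exceptional configuration and hence \(H^{d-1}_{\cdh}(E,\ZZ)=0\); this is exactly why cones over smooth varieties have vanishing \(K_{-d}\). Instead I would take \((X,v)\) to be a \(d\)-dimensional cusp singularity à la Tsuchihashi---the higher-dimensional analogue of a Hirzebruch cusp---attached to a strongly convex rational cone together with a group \(\Gamma\) of lattice automorphisms acting on it. Such a singularity is normal with a single singular point, and its canonical toroidal resolution has exceptional divisor \(E\) a union of smooth toric varieties whose dual complex \(\Delta(E)\) is a \((d-1)\)-torus \(T^{d-1}\). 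For a union of smooth pieces one has \(H^{d-1}_{\cdh}(E,\ZZ)\cong H^{d-1}(\Delta(E),\ZZ)\), so \(H^{d-1}_{\cdh}(E,\ZZ)\cong H^{d-1}(T^{d-1},\ZZ)=\ZZ\neq0\). For \(d=2\) this recovers the classical surface cusp, whose exceptional fibre is a cycle of rational curves with \(H^{1}(S^{1},\ZZ)=\ZZ\).

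The main obstacle is the geometric input rather than the \(K\)-theoretic bookkeeping: one must exhibit, in every dimension \(d\geq2\) and over an arbitrary field, a normal isolated singularity whose resolution has an exceptional fibre with a genuinely \((d-1)\)-dimensional dual complex (a closed manifold such as \(T^{d-1}\)), and verify normality, the single-point singular locus, and the existence of a toroidal resolution in a characteristic-free manner. This is where the cusp construction does the real work, since its exceptional configuration has the homotopy type of a torus rather than of a point. By contrast, the remaining ingredients---the bottom-degree agreement \(K_{-d}\cong KH_{-d}\), cdh descent for \(KH\), and the identification of \(H^{\bullet}_{\cdh}(E,\ZZ)\) with the cohomology of the dual complex---are standard.
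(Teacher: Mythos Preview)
The paper does not prove this theorem: it is quoted as a black box from Reid~\cite{Reid87} and used only for \(k=\CC\) in the proof of \cref{ce_inj}. Reid's 1987 argument necessarily predates cdh descent for \(KH\) (Haesemeyer, Cisinski) and the Kerz--Strunk--Tamme resolution of Weibel's conjecture by decades; his rings are built combinatorially and their negative \(K\)-groups are computed directly, without any of the descent machinery you invoke. Your route through the abstract blow-up square, the chain of identifications \(K_{-d}(X)\cong KH_{-d}(X)\cong KH_{-(d-1)}(E)\cong H^{d-1}_{\cdh}(E,\ZZ)\cong H^{d-1}(\Delta(E),\ZZ)\), and the torus dual complex of a cusp resolution is conceptually clean and explains \emph{why} cusp-type singularities are the right examples, but it trades Reid's elementary computation for very heavy recent machinery.

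The genuine gap is the one you yourself flag but do not close: the geometric input over an arbitrary field. Tsuchihashi's construction of higher-dimensional cusps is complex-analytic; extracting from it a normal \emph{affine \(k\)-algebra} for arbitrary \(k\), together with a toroidal resolution whose exceptional divisor is snc with the asserted dual complex, is not automatic---algebraicity of higher cusps is already a theorem rather than a definition over~\(\CC\), and in positive characteristic you would also have to produce the resolution by hand. Since this is precisely the step that manufactures the nonvanishing class, your proposal as written does not establish the theorem in the stated generality. For the paper's actual needs (\(k=\CC\)) the strategy is salvageable via known algebraization results for toroidal singularities, but then you are reproving only the special case that the paper uses, with far more input than Reid required.
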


\begin{proof}[Proof of \cref{ce_inj}]
  We fix \(d\geq2\)
  and construct a counterexample for \({*}=-d\).
  By \cref{ind_ban},
  it suffices to find a Stein compact set~\(Z\)
  such that \(K_{-d}(\Cls{O}(Z))\to K_{-d}(\Cls{A}(Z))\)
  is not injective.

  We fix a counterexample \((A,\idl{m})\)
  of \cref{reid}
  over \(k=\CC\).
  We write \(Z\) for the Stein compact set
  associated to the point~\(\idl{m}\)
  inside the analytification of \(\Spec A\).
  Since \(\Cls{A}(Z)\simeq\CC\),
  it suffices to show that
  \(\Cls{O}(Z)\) has nonvanishing \(K_{-d}\).
  We consider the maps
  \begin{equation*}
    A\to\Cls{O}(Z)\to A_{\idl{m}}^{\wedge}.
  \end{equation*}
By~\cite[Proposition~1.6]{Weibel80},
  the composite induces
  an isomorphism on~\(K_{-d}\).
  Therefore,
  \(K_{-d}(\Cls{O}(Z))\) is a retract
  of \(K_{-d}(A)\), which is nonzero.
\end{proof}

\let\top\oldtop \bibliographystyle{plain}
 \newcommand{\yyyy}[1]{}

\end{document}